\newtheorem{theorem}{Theorem}[section]
\newtheorem{prop}[theorem]{Proposition}
\newtheorem{lem}[theorem]{Lemma}
\newtheorem{cor}[theorem]{Corollary}
\newtheorem{rem}[theorem]{Remark}
\newtheorem{exa}[theorem]{Example}
\def\gl{\buildrel \rm def\over =}
\DeclareMathOperator{\one}{\mathbbm{1}}
\begin{document}

\allowdisplaybreaks

\title{\bfseries Copula Relations in Compound Poisson Processes}

\author{%
    \textsc{Christian Palmes}
    \thanks{Lehrstuhl IV, Fakult\"at f\"ur Mathematik, Technische Universit\"at Dortmund,
              D-44227 Dortmund, Germany,
              \texttt{christian.palmes@math.tu-dortmund.de}}
    }


\maketitle

\begin{abstract}
We investigate in multidimensional compound Poisson processes (CPP) the relation between the dependence structure of the jump distribution and the dependence structure of the respective components of the CPP itself. For this purpose the asymptotic $\lambda t\to \infty$ is considered, where $\lambda$ denotes the intensity and $t$ the time point of the CPP. For modeling the dependence structures we are using the concept of copulas. We prove that the copula of a CPP converges under quite general assumptions to a specific Gaussian copula, depending on the underlying jump distribution.

Let $F$ be a $d$-dimensional jump distribution $(d\geq 2)$, $\lambda>0$ and let $\Psi(\lambda,F)$ be the distribution of the corresponding CPP with intensity $\lambda$ at the time point $1$. Further, denote the operator which maps a $d$-dimensional distribution on its copula as $\mathcal{T}$. The starting point of our investigation was the validity of the equation
\begin{equation} \label{marFreeEq}
\mathcal{T}(\Psi(\lambda,F))=\mathcal{T}(\Psi(\lambda,\mathcal{T}F)).
\end{equation}
Our asymptotic theory implies that this equation is, in general, not true. 

A simulation study that confirms our theoretical results is given in the last section.
\end{abstract}

{\bf Keywords:} multidimensional jump relations, compound Poisson processes, asymptotic copulas, limit theorems \\

{\bf MSC 2010 Classification:} 60G51,  60F99, 62H20\\

\section{Introduction}

Let $(N_t)_{t\geq 0}$ be a Poisson process on a probability space $(\Omega,\mathcal{F},P)$ with intensity $\lambda>0$ and let
$$X_j\,:\,(\Omega,\mathcal{F}) \to (\mathds{R},\mathcal{B}(\mathds{R}^d)),\quad j\in\mathds{N}$$
be a sequence of i.i.d. random variables, such that $(X_j)_{j\in\mathds{N}}$ and $(N_t)_{t\geq 0}$ are independent. Set $F\gl P^{X_1}$ and
\begin{equation} \label{CPPEq}
Y_t\gl \sum_{j=1}^{N_t} X_j,\quad t\geq 0.
\end{equation}
$Y$ is a compound Poisson process with the $d$-dimensional jump distribution $F$. Given the $n$ equidistant observations $Y_1,\ldots,Y_n$, Buchmann and Gr\"{u}bel \cite{BucGru} propose in a one dimensional setting a deconvolution based method to estimate the underlying jump distribution $F$. In doing so they investigate relations between the jump distribution and the distribution of the resulting compound Poisson process (CPP). To get more convenient with their approach we citate Lemma 7 in \cite{BucGru}:

\newtheorem*{BucGruLem}{Lemma 7 of Buchmann, Gr\"{u}bel \cite{BucGru}}

\begin{BucGruLem}
Let $F$ and $G$ be probability distributions on $\mathds{R}_+\gl [0,\infty)$ with $\int_{(0,\infty)} e^{-\tau y}\,G(dy) < e^{-\lambda}$ for some $\lambda,\tau>0$ and
$$G\gl \Psi(\lambda,F) \gl e^{-\lambda}\sum_{j=0}^{\infty} \frac{\lambda^j}{j!} F^{\ast j}.$$
Then, it holds
\begin{equation} \label{BanconvEq}
F= \Phi(\lambda,G)\gl \sum_{j=1}^\infty \frac{(-1)^{j+1}e^{\lambda j}}{\lambda j} G^{\ast j}.
\end{equation}
\end{BucGruLem}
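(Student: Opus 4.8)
The natural approach here is via Laplace transforms, since every measure in sight is carried by $[0,\infty)$. For a finite, or merely locally finite signed, measure $\mu$ on $[0,\infty)$ write $\widehat\mu(s)=\int_{[0,\infty)}e^{-sy}\,\mu(dy)$. The plan is to record first the identity $\widehat G(s)=e^{\lambda(\widehat F(s)-1)}$, valid for all $s\ge 0$: this is immediate from $\widehat{F^{\ast j}}=\widehat F^{\,j}$, $F^{\ast 0}=\delta_0$, Tonelli, and the exponential series. Inverting, $\widehat F(s)=1+\tfrac1\lambda\log\widehat G(s)$, so the whole statement reduces to recognising the right-hand side as the Laplace transform of the series defining $\Phi(\lambda,G)$ and then invoking uniqueness of Laplace transforms. (A more combinatorial route would be to verify directly that $\Psi(\lambda,\cdot)$ and $\Phi(\lambda,\cdot)$ compose to the identity by multiplying out convolution powers, but the transform computation sidesteps that bookkeeping.)

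For the recognition step I would peel off the atom of $G$ at the origin. Since the jumps are positive, $F(\{0\})=0$, hence $G(\{0\})=e^{-\lambda}$; set $\widetilde G\gl G-e^{-\lambda}\delta_0\ge 0$, the \emph{jump part} of $G$, carried by $(0,\infty)$, so that the hypothesis reads $\widehat{\widetilde G}(\tau)=\int_{(0,\infty)}e^{-\tau y}\,G(dy)<e^{-\lambda}$. Then $\widehat G(s)=e^{-\lambda}+\widehat{\widetilde G}(s)$, so $\log\widehat G(s)=-\lambda+\log\bigl(1+e^{\lambda}\widehat{\widetilde G}(s)\bigr)$, and the key observation is that the hypothesis forces $e^{\lambda}\widehat{\widetilde G}(s)\in[0,1)$ for every $s\ge\tau$, which licenses the Mercator series $\log(1+w)=\sum_{j\ge 1}\tfrac{(-1)^{j+1}}{j}w^{j}$. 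Substituting and using $\widehat{\widetilde G^{\ast j}}=\widehat{\widetilde G}^{\,j}$ collapses $1+\tfrac1\lambda\log\widehat G(s)$ to $\sum_{j\ge 1}\tfrac{(-1)^{j+1}e^{\lambda j}}{\lambda j}\widehat{\widetilde G}(s)^{j}$ for $s\ge\tau$, i.e.\ to the termwise Laplace transform of $\Phi(\lambda,G)=\sum_{j\ge1}\tfrac{(-1)^{j+1}e^{\lambda j}}{\lambda j}\widetilde G^{\ast j}$ (the natural variable here being $\widetilde G$ rather than $G$ itself).

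It remains to upgrade this termwise transform identity to an identity of measures, and this is where the real work lies: the series $\sum_j\tfrac{(-1)^{j+1}e^{\lambda j}}{\lambda j}\widetilde G^{\ast j}$ need not converge in total variation (its term masses grow like $(e^{\lambda}-1)^{j}$), so $\mu\mapsto\widehat\mu$ cannot be applied blindly term by term. Instead I would combine the Markov--Chernoff bound $\widetilde G^{\ast j}([0,x])\le e^{\tau x}\widehat{\widetilde G}(\tau)^{j}$ with the hypothesis, which gives $\sum_j|c_j|\widehat{\widetilde G}(\tau)^{j}<\infty$ for $c_j=\tfrac{(-1)^{j+1}e^{\lambda j}}{\lambda j}$, to conclude that the series converges absolutely on every bounded interval; thus $\Phi(\lambda,G)$ is a well-defined locally finite signed measure dominated by the measure $\Theta\gl\sum_j|c_j|\widetilde G^{\ast j}$, which satisfies $\int e^{-sy}\Theta(dy)<\infty$ for all $s\ge\tau$. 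Dominated convergence for signed measures then yields $\widehat{\Phi(\lambda,G)}(s)=\sum_j c_j\widehat{\widetilde G}(s)^{j}=\widehat F(s)$ for $s\ge\tau$, and tilting both $F$ and $\Phi(\lambda,G)$ by $e^{-\tau y}$ turns them into finite signed measures with equal Laplace transforms on $[0,\infty)$; uniqueness of the Laplace transform gives equality, and untilting delivers $F=\Phi(\lambda,G)$ (with the by-product that $\Phi(\lambda,G)$ is in fact a probability measure despite the sign-alternating series). The main obstacle is exactly this last paragraph — identifying the weak, merely local, mode in which the $\Phi$-series converges and checking that the hypothesis is precisely strong enough to justify interchanging the sum with the Laplace transform; the algebra of the first two paragraphs is routine once $\widehat G=e^{\lambda(\widehat F-1)}$ is in hand.
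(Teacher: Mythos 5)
Your argument is sound, and it is worth saying up front that the paper you are comparing against does not prove this statement at all: it is quoted from Buchmann and Gr\"ubel, and their proof runs through a Banach algebra $D(\tau)$ of exponentially weighted measures/distribution functions under convolution, in which $\Psi(\lambda,\cdot)$ is the convolution exponential and $\Phi(\lambda,\cdot)$ the convolution logarithm; the hypothesis $\int_{(0,\infty)}e^{-\tau y}\,G(dy)<e^{-\lambda}$ is exactly the condition $\|e^{\lambda}G-\delta_0\|_{\tau}<1$ that makes the logarithm series converge in norm. Your route --- the identity $\widehat G=e^{\lambda(\widehat F-1)}$, the Mercator expansion of $\log\widehat G$ after splitting off the atom, the Chernoff bound $\widetilde G^{\ast j}([0,x])\le e^{\tau x}\widehat{\widetilde G}(\tau)^{j}$ to get local absolute convergence, domination to interchange sum and transform, and uniqueness of the Laplace transform for finite signed measures after tilting by $e^{-\tau y}$ --- is a genuinely different and more elementary proof, and each step checks out. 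In fact your dominating measure $\Theta$ having finite $e^{-\tau y}$-mass is precisely the statement that the series converges in the weighted total-variation norm, so both proofs spend the hypothesis in the same place; what the Banach-algebra formulation buys Buchmann--Gr\"ubel is convergence in a norm in which $\Phi(\lambda,\cdot)$ is continuous (which they need for plug-in estimation), while your version buys self-containedness and avoids the functional calculus.

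One point you should promote from a parenthesis to a visible hypothesis: the passage to $\widetilde G=G-e^{-\lambda}\delta_0$ and the assumption $F(\{0\})=0$ are not cosmetic, they are the only reading under which the displayed formula is true. If $F(\{0\})>0$ then $G(\{0\})=e^{-\lambda(1-F(\{0\}))}>e^{-\lambda}$ and your decomposition changes; and even with $F(\{0\})=0$, the series taken literally with the full measure $G$ (atom $e^{-\lambda}$ at the origin included) cannot converge to $F$: the $j$-th term places mass $\frac{(-1)^{j+1}}{\lambda j}$ at $\{0\}$, and these sum to $\frac{\log 2}{\lambda}\neq 0=F(\{0\})$, while the transforms $e^{\lambda}\widehat G(s)\ge 1$ put you outside the Mercator disc. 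So the lemma is to be read, as in Buchmann--Gr\"ubel's setting of strictly positive jumps, with the convolution powers taken of the restriction of $G$ to $(0,\infty)$ (equivalently of $e^{\lambda}G-\delta_0$, rescaled); your proof establishes exactly that statement, but say so explicitly rather than leaving it as an aside.
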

The convergence of the right-hand sum in (\ref{BanconvEq}) holds in some suitable Banach space $D(\tau)$ introduced in detail in Buchmann, Gr\"ubel \cite{BucGru}. Note, that we have by considering (\ref{CPPEq}) the relation 
$$P^{Y_t-Y_{t-1}}=\Psi(\lambda,F),\quad t=1,\ldots,n.$$ 
In this sense, $\Phi$ in the cited Lemma 7 regains the jump distribution $F$ out of $\Psi(\lambda,F)$. 

In this paper we analyse in a multidimensional setting the relation between the copula of the jump distribution and the copula of the associated CPP under the asymptotic $\lambda t\to\infty$. Our investigations imply for instance that a multidimensional copula analogue of the above Lemma 7 in \cite{BucGru} does not hold. To be more precise it is even not possible to define a copula analogue version of the function $\Psi$, cf. Remark \ref{copNotFunc}.

\subsection*{Organisation of this paper}
To keep the technical overhead as small as possible, we assume in what follows w.l.o.g. the case $d=2$. 

\textit{Section 2} simply states some useful definitions for our needs. If $F$ is a two dimensional distribution with continuous margins, i.e. $F\in\mathcal{M}^c$, we denote with $\mathcal{T}F$ its unique copula, cf. Proposition \ref{copUniqueProp}.

In \textit{Section 3}, we consider the copula of a compound Poisson process $Y$ under the asymptotic $\lambda t\to\infty$, i.e. we consider the limit behaviour of
\begin{equation} \label{copAsympEq}
\mathcal{T} P^{Y_t} = \mathcal{T} \Psi(\lambda t, P^{X_1}),\quad \lambda t\to\infty.
\end{equation}
Obviously, (\ref{copAsympEq}) implies that we can fix w.l.o.g. $t\gl 1$ and consider only the intensity limit $\lambda\to\infty$. In this context, Theorem \ref{CopGaussTheorem} yields the convergence
$$\mathcal{T}\Psi(\lambda,F) \to \mathcal{T} N(0,\Sigma),\quad \lambda\to\infty$$
which is uniform on $[0,1]^2$. Here, $\Sigma\in\mathds{R}^{2\times 2}$ denotes a positive definite matrix defined in Theorem \ref{CopGaussTheorem}. Thus, it follows that the copula of a CPP converges uniformly under this asymptotic to the Gaussian copula $\mathcal{T} N(0,\Sigma)$. To distinguish between the Gaussian limit copulas, we have to investigate whether $\mathcal{T}N(0,\Sigma) = \mathcal{T}N(0,\Sigma')$ holds for two positive definite matrices $\Sigma,\Sigma'\in\mathds{R}^{2\times 2}$. This is done in Proposition \ref{GaussCopEqProp}: With the notations in Section 2 the entity
$$\rho(P^{X_1}) = \rho(F) \gl \frac{\int xy\,dF(x,y)}{\sqrt{\int x^2\,dF(x,y)\int y^2\,dF(x,y)}}$$
determines the limit copula of the CPP with jump distribution $F=P^{X_1}$. Using this asymptotic approach, the statement of Corollary \ref{copulaNotInjectivCor} implies that (\ref{marFreeEq}) is, in general, not true, compare Remark \ref{copNotFunc}.

In \textit{Section 4}, we analyse the resulting limit copulas of all compound Poisson processes in a certain way. For this purpose, we investigate the map $F\mapsto \rho(F)$. The first interesting question is whether
$$\rho(\mathfrak{C}) {\buildrel ? \over =}  \rho(\mathcal{M}_+^c) = (0,1]$$
holds, where $\mathcal{M}_+^c$ denotes all two dimensional distributions $F$ with $F([0,\infty)^2)=1$ and continuous margins. To say it in prose: The question is, whether the set of jump distributions which consists of the set of copulas $\mathfrak{C}$, can generate every limit copula, which belongs to a CPP with positive jumps. Proposition \ref{CopulaRhoPro} states that this is not the case since we have 
$$\rho(\mathfrak{C})\subseteq \left[\frac{1}{2},1\right].$$
Note, that Cauchy Schwarz yields for every $F$ the inequalities $-1\leq \rho(F) \leq 1$ and that $\rho$ can be geometrically interpreted as the cosinus of the two coordinates of $X_1$ in the Hilbert space $L^2(\Omega,\mathcal{F},P)$. Thus, from the above geometric point of view, copulas always span an angle between $0$ and $60$ degrees. Additionally, 
Example \ref{ClaytonExample} states that all limit copulas that are reachable by a copula jump distribution are even obtained by a Clayton copula, i.e.
$$\rho(\{C_\theta\,:\,\theta\in [-1,\infty)\backslash \{0\}\}\cup \{\Pi\}\cup \{M\}) = \left[\frac{1}{2},1\right],$$
see Section 2 for the notation. Finally, Example \ref{remainderExample} provides the answer to the question of how to obtain the remaining limit copulas which belong to $\rho\in \left(0,\frac{1}{2}\right)$. In this example, we describe a constructive procedure how to construct such jump distributions: Fix any $0<\epsilon<1$. Simulate two independent on $[0,1]$ uniformly distributed, i.e. $U[0,1]$ distributed, random variables $U$ and $V$. If $|U-V|\geq \epsilon$, make the jump $(U,V)\in\mathds{R}^2$. Else repeat this procedure until the difference between $U$ and $V$ is not less that $\epsilon$, and make afterwards the jump $(U,V)\in\mathds{R}^2$. All necessary repetitions are performed independently from each other. Then, if $\epsilon$ runs through the interval $(0,1)$, we get a set of corresponding $\rho$ values that includes $\left(0,\frac{1}{2}\right)$. Observe that the resulting jumps of the above procedure are all positive.

Finally, in \textit{Section 5}, a simulation study is presented. We investigate the finite sample behavior of the convergence statement (\ref{mainConvEq}) in Theorem \ref{CopGaussTheorem} by use of several MATLAB simulations. Clayton copulas are simulated for the jump distribution of the respective compound Poisson processes.

\section{Basic definitions} \label{defSec}

We denote with $\mathcal{M}$ the set of all probability measures on $(\mathds{R}^2,\mathcal{B}^2)$ where $\mathcal{B}^2$ are the Borel sets of $\mathds{R}^2$. Furthermore, define
$$F\in\mathcal{M}^c\,:\iff\,F\in\mathcal{M}\text{ and } 
F(\{x\}\times \mathds{R}) = F(\mathds{R}\times\{x\}) = 0,\quad x\in\mathds{R},$$
i.e. the case that both margins of $F$ are continuous. If we have additionally
$$F([0,x]\times \mathds{R}) = F(\mathds{R}\times [0,x]) = x,\quad 0\leq x\leq 1,$$
we write $F\in\mathfrak{C}$ and call it a copula. Thus, we have defined a further subclass and have altogether the inclusions
$$\mathfrak{C}\subseteq \mathcal{M}^c\subseteq \mathcal{M}.$$
Define next $\mathds{R}_+\gl [0,\infty)$ and
$$\mathcal{M}_+\gl \left\{F\in\mathcal{M}\,:\,F\left(\mathds{R}^2\backslash \mathds{R}_+^2\right)=0\right\},\quad \mathcal{M}_+^c\gl \mathcal{M}_+\cap\mathcal{M}^c$$
and observe $\mathfrak{C}\subseteq \mathcal{M}_+^c$.

For a more convenient notation, we do not distinguish between a probability measure and its distribution function, e.g. we shall write without confusion
$$F((-\infty,x]\times(-\infty,y]) = F(x,y),\quad x,y\in\mathds{R},\,F\in\mathcal{M}.$$
The definition of the map $\mathcal{T}$ in the following proposition is crucial for what follows.
\begin{prop} \label{copUniqueProp}
It exists a unique map
$$\mathcal{T}\,:\,\mathcal{M}^c \to \mathfrak{C}$$
with the property
$$F(x,y) = (\mathcal{T}F)(F_1(x),F_2(y)),\quad x,y\in\mathds{R}$$
where $F_k,\, k=1,2$ denotes the $k$-th marginal distribution of $F\in\mathcal{M}^c$.
\end{prop}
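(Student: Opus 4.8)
The statement is the continuous-margin case of Sklar's theorem, and I would prove it by constructing $\mathcal{T}F$ via the probability integral transform and establishing uniqueness through the density of the marginal ranges in $[0,1]$.

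For \emph{existence}, fix $F\in\mathcal{M}^c$ and regard $(X_1,X_2)$ as the identity map on $(\mathds{R}^2,\mathcal{B}^2,F)$, so that $(X_1,X_2)$ has law $F$ and $X_k$ has law $F_k$. Since $F_k$ is continuous, the random variable $U_k\gl F_k(X_k)$ is $U[0,1]$-distributed. Let $\mathcal{T}F$ be the joint distribution function of $(U_1,U_2)$; by construction both margins of $\mathcal{T}F$ are $U[0,1]$ and its support lies in $[0,1]^2$, hence $\mathcal{T}F\in\mathfrak{C}$. It then remains to check $F(x,y)=(\mathcal{T}F)(F_1(x),F_2(y))$, i.e.
\[
P\bigl(F_1(X_1)\le F_1(x),\,F_2(X_2)\le F_2(y)\bigr)=P(X_1\le x,\,X_2\le y).
\]
The inclusion $\{X_k\le x\}\subseteq\{F_k(X_k)\le F_k(x)\}$ is immediate from monotonicity; for the converse I would show $P(X_k>x,\,F_k(X_k)\le F_k(x))=0$ by observing that on this event $F_k$ is constant, equal to $F_k(x)$, on an interval to the right of $x$, and that continuity of $F_k$ forces this interval to carry no $F_k$-mass. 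Thus $\{X_k\le x\}$ and $\{F_k(X_k)\le F_k(x)\}$ differ by a null set, which yields the displayed identity.

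For \emph{uniqueness}, suppose $C,C'\in\mathfrak{C}$ both satisfy $F(x,y)=C(F_1(x),F_2(y))=C'(F_1(x),F_2(y))$ for all $x,y\in\mathds{R}$. Since each $F_k$ is continuous with limits $0$ and $1$ at $\mp\infty$, the intermediate value theorem gives $(0,1)\subseteq F_k(\mathds{R})$, so $C$ and $C'$ agree on the dense subset $(0,1)^2$ of $[0,1]^2$. As elements of $\mathcal{M}$ with $U[0,1]$ margins, copulas are $1$-Lipschitz in each coordinate (this follows from $C(u_2,v)-C(u_1,v)=P(u_1<U_1\le u_2,\,U_2\le v)\le u_2-u_1$), hence continuous; therefore $C=C'$ on all of $[0,1]^2$, and since a copula is supported on $[0,1]^2$ its distribution function determines the measure, so $C=C'$ in $\mathfrak{C}$. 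Applying this to $C=\mathcal{T}F$ from the existence step shows the map $\mathcal{T}$ is unique.

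The main obstacle is the measure-theoretic bookkeeping in the existence step: proving carefully that $\{F_k(X_k)\le F_k(x)\}$ and $\{X_k\le x\}$ coincide up to a $P$-null set when $F_k$ is merely continuous rather than strictly increasing, and then lifting the one-dimensional integral transform to the joint statement. The remaining ingredients — Lipschitz continuity of copulas and the density of $F_k(\mathds{R})$ in $[0,1]$ — are routine.
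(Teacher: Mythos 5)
Your proof is correct, but it takes a different route from the paper: the paper does not prove the statement at all, it simply invokes Theorem 2.3.3 (Sklar's theorem) in Nelsen's book, for which the proposition is the continuous-margins special case. What you give is the standard direct argument for that special case: existence via the probability integral transform, setting $\mathcal{T}F\gl P^{(F_1(X_1),F_2(X_2))}$ and checking $F(x,y)=(\mathcal{T}F)(F_1(x),F_2(y))$ by showing $\{X_k\le x\}$ and $\{F_k(X_k)\le F_k(x)\}$ differ by a $P$-null set (your reduction to the flat piece of $F_k$ to the right of $x$, which carries no mass by continuity, is exactly the right way to close this); and uniqueness via $(0,1)\subseteq F_k(\mathds{R})$ (intermediate value theorem), density of $(0,1)^2$, and the $1$-Lipschitz property of copulas, which you correctly derive from the uniform margins, plus the observation that the distribution function of a measure concentrated on $[0,1]^2$ determines the measure. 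The one point to be aware of is that your argument is genuinely tied to continuity of the margins (both for $F_k(X_k)\sim U[0,1]$ and for surjectivity of $F_k$ onto $(0,1)$), whereas the cited theorem of Sklar covers general margins with uniqueness only on $\mathrm{Ran}\,F_1\times\mathrm{Ran}\,F_2$; since the paper restricts $\mathcal{T}$ to $\mathcal{M}^c$, this is exactly the case needed, so your self-contained proof is a valid substitute for the citation, trading brevity for independence from the reference.
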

\begin{proof}
This is a consequence of Theorem 2.3.3. (Sklar) in Nelsen \cite{Nel}.
\end{proof}
$\mathcal{T}$ is a map that transforms a probability measure in its copula. We require the margins to be continuous in order to get a unique map. Note that it holds of course $\mathcal{T}_{|\mathfrak{C}} = \text{id}_{|\mathfrak{C}}$. We will deal with the following concrete copulas:
\begin{equation*}
\begin{array} {ll}
\Pi(u,v) = uv & \text{independence copula,} \\
W(u,v) = \max(u+v-1,0) & \text{Fr\'{e}chet-Hoeffding lower bound,} \\
M(u,v) = \min(u,v) & \text{Fr\'{e}chet-Hoeffding upper bound,} \\
C_\theta(u,v) = \left\{\left(\max\{u^{-\theta}+v^{-\theta}-1,0\}\right)^{-\frac{1}{\theta}}\right\}_{\theta\in [-1,\infty)\backslash\{0\}} & \text{family of Clayton copulas,}
\end{array}
\end{equation*}
with $0\leq u,v \leq 1$. Furthermore, define a function $\rho$ via
\begin{equation} \label{rhodef}
\rho(F) = \frac{\int xy\,dF(x,y)}{\sqrt{\int x^2\,dF(x,y){\int y^2\,dF(x,y)}}}
\end{equation}
on the domain of all $F\in\mathcal{M}^c$ which possess square integrable margins, and write as in Buchmann and Gr\"{u}bel \cite{BucGru}
$$\Psi(\lambda,F) = e^{-\lambda}\sum_{k=0}^\infty \frac{\lambda^k}{k!}F^{\ast k},\quad F\in\mathcal{M}.$$

\section{Asymptotic results}

\begin{lem} \label{contLem}
Given $F,\,G\in\mathcal{M}^c$ and $\lambda>0$. Then, it holds $F\ast G\in\mathcal{M}^c$ and $\Psi(\lambda,F) \in \mathcal{M}^c$.
\end{lem}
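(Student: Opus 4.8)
The statement has two parts: (i) $F*G\in\mathcal{M}^c$ whenever $F,G\in\mathcal{M}^c$, and (ii) $\Psi(\lambda,F)\in\mathcal{M}^c$ whenever $F\in\mathcal{M}^c$. The second part will follow from the first once we also observe that $\mathcal{M}^c$ is closed under countable convex combinations, so the real work is entirely in (i), and the argument for (i) reduces to a one-dimensional statement about the marginals.

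For (i), the key observation is that the $k$-th marginal of $F*G$ is the convolution $F_k*G_k$ of the corresponding one-dimensional marginals: if $(X_1,X_2)$ has law $F$ and $(Z_1,Z_2)$ has law $G$, independent, then $F*G$ is the law of $(X_1+Z_1,X_2+Z_2)$, whose $k$-th coordinate is $X_k+Z_k$ with law $F_k*G_k$. So I must show that the convolution of a continuous (atomless) distribution on $\mathds{R}$ with an arbitrary probability distribution on $\mathds{R}$ is again atomless. First I would recall that a distribution $\mu$ on $\mathds{R}$ is atomless iff $\mu(\{a\})=0$ for every $a\in\mathds{R}$. Then, by Fubini/Tonelli,
\[
(F_k*G_k)(\{a\}) = \int_{\mathds{R}} F_k(\{a-z\})\,G_k(dz) = \int_{\mathds{R}} 0\,\,G_k(dz) = 0,
\]
using that $F_k$ is atomless so $F_k(\{a-z\})=0$ for every $z$. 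Hence every marginal of $F*G$ is atomless, i.e. $F*G\in\mathcal{M}^c$. (Symmetrically, one could instead put the atomless factor in the inner position; either works since only one of the two marginals needs to be continuous.)

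For (ii): fix $a\in\mathds{R}$ and write $\Psi(\lambda,F)_k$ for the $k$-th marginal of $\Psi(\lambda,F)$. Since taking marginals commutes with the countable sum (monotone convergence, all terms nonnegative), $\Psi(\lambda,F)_k = e^{-\lambda}\sum_{n\ge 0}\frac{\lambda^n}{n!}(F_k)^{*n}$, so it suffices to show $(F_k)^{*n}(\{a\})=0$ for every $n\ge 1$ (the $n=0$ term is $\delta_0$, but it is weighted and summed with the strictly positive terms — more carefully, $\Psi(\lambda,F)_k(\{a\}) = e^{-\lambda}\sum_{n\ge0}\frac{\lambda^n}{n!}(F_k)^{*n}(\{a\})$, and since $F_k$ is atomless, $(F_k)^{*1}(\{a\})=0$, and then by induction $(F_k)^{*(n+1)}(\{a\}) = ((F_k)^{*n}*F_k)(\{a\}) = 0$ by the same Fubini computation as in part (i), for all $n\ge1$; the $n=0$ atom at $0$ gets weight $e^{-\lambda}$, so this does NOT vanish). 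This shows the subtlety: $\Psi(\lambda,F)$ genuinely has an atom at the origin coming from $N_t=0$, so the marginals of $\Psi(\lambda,F)$ are NOT atomless, and the claim as I have paraphrased it would be false. I must therefore revisit what $\mathcal{M}^c$ means here and re-read the definition — and indeed the resolution is that one should not expect literal atomlessness; rather, the intended claim is presumably about $\Psi(\lambda,F)$ viewed after the relevant normalization, or the lemma should be read together with the standing hypotheses of Section 3. The main obstacle is exactly this point: reconciling the atom at $0$ with membership in $\mathcal{M}^c$, which forces a careful reading of the definitions rather than a routine computation.
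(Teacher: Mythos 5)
Your part (i) is correct and is exactly the paper's argument: the marginals of $F\ast G$ are the convolutions $F_j\ast G_j$ of the marginals, and Fubini gives $(F_j\ast G_j)(\{r\})=\int_{\mathds{R}} F_j(\{r-y\})\,dG_j(y)=0$ since $F_j$ is atomless.

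Your objection in part (ii) is not a misreading of the definitions; it is correct, and it identifies a genuine error in the lemma as stated and in the paper's own proof. The paper writes $\Psi(\lambda,F)_j(\{r\})=e^{-\lambda}\sum_{k\geq 0}\frac{\lambda^k}{k!}\,(F^{\ast k})_j(\{r\})$ and asserts this vanishes ``because of what we have proven at the beginning,'' but part (i) together with induction only covers $k\geq 1$: the $k=0$ term is $F^{\ast 0}=\delta_{(0,0)}$, whose marginals are $\delta_0$, so $\Psi(\lambda,F)_j(\{0\})=e^{-\lambda}>0$ exactly as you computed. Probabilistically this is the event $N_1=0$, and the paper itself concedes the atom at the origin in Section 5 when discussing the simulations. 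There is no hidden normalization or standing hypothesis in Section 3 that removes it, so you should not have hedged at the end: the correct conclusion is that $F\ast G\in\mathcal{M}^c$ holds, while the second assertion of Lemma \ref{contLem} is false as stated and should be weakened to ``the marginals of $\Psi(\lambda,F)$ have no atoms except one of mass $e^{-\lambda}$ at $0$,'' which is precisely what your induction proves.

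For completeness, the way the subsequent development can be repaired (which is what your closing speculation is groping for): strictly speaking $\mathcal{T}\Psi(\lambda,F)$ is then not defined by Proposition \ref{copUniqueProp}, but the existence part of Sklar's theorem still provides a (non-unique) copula $C_\lambda$ of $\Psi(\lambda,F)$ satisfying $C_\lambda(F_\lambda^1(x),F_\lambda^2(y))=\Psi(\lambda,F)(x,y)$; the proof of Proposition \ref{WeakCopConvPro} uses only this identity, the Lipschitz property of copulas and the continuity of the limit law and its margins, so Theorem \ref{CopGaussTheorem} survives in the form ``for any choice of copulas $C_\lambda$ of $\Psi(\lambda,F)$ one has $\sup_{u,v}|C_\lambda-\mathcal{T}N(0,\Sigma)|\to 0$,'' the atom of mass $e^{-\lambda}$ being asymptotically negligible. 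But as the lemma and its proof stand in the paper, your refusal to certify part (ii) is the right call.
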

\begin{proof}
Fubinis theorem yields with any fixed $r\in\mathds{R}$
\begin{eqnarray*}
(F\ast G)_j(\{r\}) &=& (F_j\otimes G_j)(\{(x,y)\in\mathds{R}^2\,:\, x+y =r\}),\quad j=1,2 \\&=&
\int_\mathds{R} \int_\mathds{R} \one_{\{r-y\}}(x)\,dF_j(x)\,dG_j(y) \\ &=&
\int_\mathds{R} F_j(\{r-y\})\,dG_j(y) \\&=&
0.
\end{eqnarray*}

In order to prove the second assertion note that we have
\begin{eqnarray*}
\Psi(\lambda,F)_j(\{r\}) &=& \left(e^{-\lambda} \sum_{k=0}^\infty \frac{\lambda^k}{k!} F^{\ast k}\right)_j(\{r\}),\quad j=1,2 \\ &=&
e^{-\lambda} \sum_{k=0}^\infty \frac{\lambda^k}{k!} (F^{\ast k})_j(\{r\}) \\&=&
0
\end{eqnarray*}
where the last expression is zero because of what we have proven at the beginning.
\end{proof}

\begin{prop} \label{WeakCopConvPro}
Let $(F_n)_{n\in\mathds{N}_0} \subseteq \mathcal{M}^c$ and
$$F_n \,\, {\buildrel \text{d} \over \to}\,\, F_0,\quad n\to\infty.$$
Then, we have
\begin{equation} \label{weakCopClaim}
\sup_{u,v\, \in [0,1]}|(\mathcal{T}F_n)(u,v) - (\mathcal{T}F_0)(u,v)| \to 0,\quad n\to\infty.
\end{equation}
\end{prop}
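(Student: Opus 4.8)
The plan is to reduce the claimed uniform convergence of copulas to a combination of three soft ingredients: pointwise convergence of the distribution functions (which follows from the hypothesis once one observes that the limit is continuous), the classical Lipschitz estimate for copulas, and a compactness argument on the square $[0,1]^2$.

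\emph{First, the reduction step.} Since $F_n \,{\buildrel \text{d} \over \to}\, F_0$, projecting onto each coordinate gives $F_{n,k}\,{\buildrel \text{d} \over \to}\, F_{0,k}$ for $k=1,2$. As $F_0\in\mathcal{M}^c$, the margins $F_{0,1},F_{0,2}$ are continuous, and from the elementary bound $|F_0(x,y)-F_0(x',y')|\le |F_{0,1}(x)-F_{0,1}(x')|+|F_{0,2}(y)-F_{0,2}(y')|$, valid for any bivariate distribution function, $F_0$ is continuous on all of $\mathds{R}^2$. Consequently the weak convergences are genuine pointwise convergences: $F_n(x,y)\to F_0(x,y)$ for every $(x,y)\in\mathds{R}^2$ and $F_{n,k}(x)\to F_{0,k}(x)$ for every $x\in\mathds{R}$, $k=1,2$.

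\emph{Next, pointwise convergence of the copulas.} Recall that every $C\in\mathfrak{C}$ satisfies the Lipschitz condition $|C(u,v)-C(u',v')|\le |u-u'|+|v-v'|$ (Nelsen \cite{Nel}). Fix $(u,v)\in[0,1]^2$. If $u\in\{0,1\}$ or $v\in\{0,1\}$, then $(\mathcal{T}F_n)(u,v)$ equals a prescribed boundary value independent of $n$, so there is nothing to prove. If $(u,v)\in(0,1)^2$, continuity of $F_{0,1}$ and $F_{0,2}$ together with the intermediate value theorem yields $x,y\in\mathds{R}$ with $F_{0,1}(x)=u$ and $F_{0,2}(y)=v$; Proposition \ref{copUniqueProp} then gives $(\mathcal{T}F_0)(u,v)=F_0(x,y)$ and $(\mathcal{T}F_n)(F_{n,1}(x),F_{n,2}(y))=F_n(x,y)$. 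Combining these identities with the Lipschitz estimate,
\[
|(\mathcal{T}F_n)(u,v)-(\mathcal{T}F_0)(u,v)|\le |u-F_{n,1}(x)|+|v-F_{n,2}(y)|+|F_n(x,y)-F_0(x,y)|,
\]
and each of the three terms on the right tends to $0$ by the reduction step, so $(\mathcal{T}F_n)(u,v)\to(\mathcal{T}F_0)(u,v)$ for every $(u,v)\in[0,1]^2$.

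\emph{Finally, upgrading to uniform convergence.} All of the functions $\mathcal{T}F_n$, $n\in\mathds{N}_0$, obey the same Lipschitz estimate, hence the family is equicontinuous on the compact set $[0,1]^2$; equicontinuity together with the pointwise convergence just established forces uniform convergence, which is exactly (\ref{weakCopClaim}). Explicitly, given $\varepsilon>0$ one covers $[0,1]^2$ by finitely many cells of diameter at most $\varepsilon$, controls $|(\mathcal{T}F_n)-(\mathcal{T}F_0)|$ at the finitely many cell centres by the pointwise convergence, and interpolates inside each cell via the Lipschitz bound. The whole argument is essentially soft; the only point demanding a little care is the reduction step, where the passage from weak to pointwise convergence of the joint distribution functions relies crucially on the continuity of $F_0$ guaranteed by $F_0\in\mathcal{M}^c$. (One could instead obtain uniform convergence of the $F_n$ themselves from the multivariate P\'olya theorem, but the equicontinuity argument makes this detour unnecessary.)
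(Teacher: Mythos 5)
Your proposal is correct and follows essentially the same route as the paper's proof: choose $x,y$ with $F_{0,1}(x)=u$, $F_{0,2}(y)=v$, combine the continuity of $F_0$ (inherited from its margins) with weak convergence and the copula Lipschitz bound to get pointwise convergence, then upgrade to uniform convergence via the shared Lipschitz constant and a finite grid on $[0,1]^2$. The only cosmetic differences are that you invoke marginal convergence via projection/continuous mapping where the paper cites Cram\'er--Wold, and you explicitly dispose of the boundary points $u,v\in\{0,1\}$, which the paper leaves implicit.
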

\begin{proof}
Set $C_n\gl \mathcal{T}F_n$ and let $F_n^j,\,j=1,2$ denote the two marginal distributions of $F_n$. Fix $(u,v)\in(0,1)^2$. Then, there exist $x,y\in\mathds{R}$ with
$$F_0^1(x) = u,\quad F_0^2(y) = v.$$
Observe
$$F_0(x,y) = C_0(F_0^1(x),F_0^2(y)) = C_0(u,v).$$
Since with the margins of $F_0$, $F_0$ itself is also continuous, the assumption $F_n\,\,{\buildrel \text{d} \over \to}\,\,F_0$ yields
\begin{equation} \label{weakCopeq1}
C_n(F_n^1(x),F_n^2(y)) = F_n(x,y) \to F_0(x,y) = C_0(u,v),\quad n\to\infty.
\end{equation}
Next, it holds
\begin{equation} \label{weakCopeq2}
|C_n(u,v)-C_n(F_n^1(x),F_n^2(y))|\leq |u-F_n^1(x)| + |v-F_n^2(y)| \to 0,\quad n\to\infty
\end{equation}
because every copula is Lipschitz continuous, cf. Nelsen \cite{Nel}[Theorem 2.2.4]. The latter convergence to zero results from $F_n^j\,\,{\buildrel \text{d} \over \to}\,\,F_0^j,\,j=1,2$ which is a direct consequence of the Cram\'{e}r-Wold Theorem. The pointwise convergence in (\ref{weakCopClaim}) follows from (\ref{weakCopeq1}) and (\ref{weakCopeq2}). 

For the uniform convergence, fix $\epsilon>0$ and choose any $m>\frac{1}{\epsilon},\,m\in\mathds{N}$. Then, we have for all $0\leq u,v \leq 1$ and
$$u_m\gl \frac{\lfloor um\rfloor}{m},\quad v_m\gl\frac{\lfloor vm\rfloor}{m}$$
\begin{eqnarray*}
|C_n(u,v)-C(u,v)| &\leq& |C_n(u,v)-C_n(u_m,v_m)|+|C_n(u_m,v_m)-C(u_m,v_m)| \\ && 
+|C(u_m,v_m)-C(u,v)| \\&\leq&
\frac{4}{m} + \max_{0\leq j,k\leq m} \left|C_n\left(\frac{j}{m},\frac{k}{m}\right) - C\left(\frac{j}{m},\frac{k}{m}\right)\right| \\ &\leq& 
5\epsilon
\end{eqnarray*}
for all $n\in\mathds{N}$ large enough. 
\end{proof}
Consider also the paper of Sempi \cite{Sem} for further results in this area.

\begin{rem} \label{posStricRem} \rm
Let $\Sigma\in\mathds{R}^{2\times 2}$ be a positive-semidefinite matrix. Then, we obviously have $$N(0,\Sigma)\in\mathcal{M}^c \iff \sigma_{11}\sigma_{22}>0.$$ 
Assume this is the case, i.e. $\sigma_{11}\sigma_{22}>0$. Then
\begin{itemize}
\item[(i)]
$\Sigma$ is strictly positive definite, iff $\frac{|\sigma_{12}|}{\sqrt{\sigma_{11}\sigma_{22}}}<1$.
\item[(ii)]
$\mathcal{T}N(0,\Sigma)=W$, iff $\frac{\sigma_{12}}{\sqrt{\sigma_{11}\sigma_{22}}}=-1$.
\item[(iii)]
$\mathcal{T}N(0,\Sigma)=M$, iff $\frac{\sigma_{12}}{\sqrt{\sigma_{11}\sigma_{22}}}=1$.
\end{itemize}  
\end{rem}

\begin{prop} \label{GaussCopEqProp}
Let $\Sigma,\,\Sigma'\in\mathds{R}^{2\times 2}$ be two positive-definite matrices with $\sigma_{11}\sigma_{22}>0$. Then, it holds
\begin{equation} \label{GaussianCopRel}
\mathcal{T}N(0,\Sigma) = \mathcal{T} N(0,\Sigma')\quad \iff \quad \frac{\sigma_{12}}{\sqrt{\sigma_{11} \sigma_{22}}} = \frac{\sigma'_{12}}{\sqrt{\sigma'_{11} \sigma'_{22}}}.
\end{equation}
\end{prop}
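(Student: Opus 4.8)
The plan is to reduce the equality of Gaussian copulas to the equality of correlation coefficients by exploiting scale invariance of copulas together with the explicit parametric form of the bivariate Gaussian distribution.

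First I would prove the direction ``$\Leftarrow$''. Write $r \gl \frac{\sigma_{12}}{\sqrt{\sigma_{11}\sigma_{22}}} = \frac{\sigma'_{12}}{\sqrt{\sigma'_{11}\sigma'_{22}}}$, which lies in $(-1,1)$ since $\Sigma,\Sigma'$ are strictly positive definite (Remark \ref{posStricRem}(i)). The key observation is that the copula of $N(0,\Sigma)$ is invariant under componentwise strictly increasing transformations, in particular under rescaling each coordinate. If $(Z_1,Z_2)\sim N(0,\Sigma)$, then $(Z_1/\sqrt{\sigma_{11}},\,Z_2/\sqrt{\sigma_{22}})\sim N(0,R)$ where $R$ is the correlation matrix with off-diagonal entry $r$; since both coordinate maps are strictly increasing bijections of $\mathds{R}$, the copula is unchanged, so $\mathcal{T}N(0,\Sigma) = \mathcal{T}N(0,R)$. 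The same argument gives $\mathcal{T}N(0,\Sigma') = \mathcal{T}N(0,R)$ with the same $R$ (because $\Sigma'$ has the same normalized off-diagonal entry $r$). Hence $\mathcal{T}N(0,\Sigma) = \mathcal{T}N(0,\Sigma')$. To make the invariance step rigorous I would invoke the uniqueness in Proposition \ref{copUniqueProp}: if $G$ is the law of $(T_1(Z_1),T_2(Z_2))$ for strictly increasing continuous $T_k$, then $G_k(x) = F_k(T_k^{-1}(x))$ and $G(x_1,x_2) = F(T_1^{-1}(x_1),T_2^{-1}(x_2)) = (\mathcal{T}F)(F_1(T_1^{-1}(x_1)),F_2(T_2^{-1}(x_2))) = (\mathcal{T}F)(G_1(x_1),G_2(x_2))$, so by uniqueness $\mathcal{T}G = \mathcal{T}F$.

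Next I would prove ``$\Rightarrow$''. By the reduction just described it suffices to show: if $\mathcal{T}N(0,R) = \mathcal{T}N(0,R')$ for correlation matrices $R, R'$ with off-diagonal entries $r, r' \in (-1,1)$, then $r = r'$. For this I would identify a functional of the copula that recovers $r$. Two natural choices: (a) compute Spearman's rho or Kendall's tau of the Gaussian copula, which are known strictly monotone functions of $r$ (e.g. Kendall's $\tau = \frac{2}{\pi}\arcsin r$); or (b) more self-containedly, evaluate the copula at the diagonal point $(1/2,1/2)$: $C_r(1/2,1/2) = P(Z_1\le 0, Z_2\le 0)$ for $(Z_1,Z_2)\sim N(0,R)$, and by the classical orthant-probability formula this equals $\frac14 + \frac{1}{2\pi}\arcsin r$, which is strictly increasing in $r$. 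Either way, equality of the copulas forces equality of this functional and hence $r = r'$. I would favor route (b) since it needs only one evaluation and a standard Gaussian quadrant-probability identity, and it connects cleanly with Remark \ref{posStricRem}(ii),(iii), which handle the boundary cases $r = \pm 1$ separately.

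The main obstacle, such as it is, is the ``$\Rightarrow$'' direction: one must exhibit some copula functional that is genuinely injective in $r$, and justify the quadrant-probability formula $P(Z_1\le 0,Z_2\le 0) = \frac14 + \frac{1}{2\pi}\arcsin r$ (or cite it). Everything else — the scale-invariance normalization and the uniqueness appeal to Proposition \ref{copUniqueProp} — is routine. A minor point to be careful about is that the reduction to correlation matrices uses strict positive definiteness (so that $r \in (-1,1)$ and the normalized $N(0,R)$ genuinely lies in $\mathcal{M}^c$, cf. Remark \ref{posStricRem}); the degenerate cases are exactly the Fréchet–Hoeffding bounds already covered by Remark \ref{posStricRem}.
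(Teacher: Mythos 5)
Your proposal is correct, but it proceeds differently from the paper. The paper works at the level of densities: it writes $\mathcal{T}N(0,\Sigma)(u,v)=\Phi_\Sigma(\phi_{\sigma_{11}}^{-1}(u),\phi_{\sigma_{22}}^{-1}(v))$, computes $\frac{\partial^2}{\partial u\partial v}\mathcal{T}N(0,\Sigma)$ explicitly, and observes after the normalization $D^t(\Sigma^{-1}-D^{-2})D$ that this copula density is a function of $a=\sigma_{12}/\sqrt{\sigma_{11}\sigma_{22}}$ alone, evaluated at the point $(\phi^{-1}(u),\phi^{-1}(v))$; combined with the surjectivity of $(u,v)\mapsto(\phi^{-1}(u),\phi^{-1}(v))$ and the identifiability of the centered bivariate normal family, this yields both implications at once (after the same reduction to strictly positive definite $\Sigma$ via Remark \ref{posStricRem} that you make). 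You instead split the equivalence: for ``$\Leftarrow$'' you use invariance of the copula under componentwise strictly increasing maps to reduce both matrices to the common correlation matrix $R$ --- this is exactly Nelsen's Theorem 2.4.3, which the paper itself invokes elsewhere (Theorem \ref{CopGaussTheorem}, Corollary \ref{copulaNotInjectivCor}), so it is a legitimate shortcut --- and for ``$\Rightarrow$'' you extract $r$ injectively from the copula via the orthant probability $C_r(\tfrac12,\tfrac12)=\tfrac14+\tfrac{1}{2\pi}\arcsin r$ (or Kendall's tau). Your route is shorter and makes the injectivity in $r$ completely explicit (the paper leaves the final identifiability step implicit behind the surjectivity remark), and your functional even handles the boundary cases $r=\pm1$ uniformly, since $\arcsin$ is strictly increasing on $[-1,1]$ and matches $M(\tfrac12,\tfrac12)=\tfrac12$, $W(\tfrac12,\tfrac12)=0$; the price is reliance on the classical quadrant-probability (Sheppard) formula, which you would need to cite or prove, whereas the paper's density computation is self-contained. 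Both arguments are sound.
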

\begin{proof}
We can assume because of the previous Remark \ref{posStricRem} w.l.o.g. that $\Sigma$ and $\Sigma'$ are strictly positive definite. Consider
$$\mathcal{T}N(0,\Sigma)(u,v) = \Phi_\Sigma(\phi_{\sigma_{11}}^{-1}(u),\phi_{\sigma_{22}}^{-1}(v)),\quad u,v\in[0,1]$$
where $\phi_\Sigma$ resp. $\phi_{\sigma_{jj}}$ denotes the cumulative distribution function of N$(0,\Sigma)$ resp. N$(0,\sigma_{jj})$, $j=1,2$. Set further $\phi\gl \phi_1$. Considering the respective densities, the equality of the left hand side in (\ref{GaussianCopRel}) is equivalent to
$$\frac{\partial^2}{\partial u\partial v} \mathcal{T} N(0,\Sigma) (u,v) = \frac{\partial^2}{\partial u\partial v} \mathcal{T}N(0,\Sigma')(u,v), \quad u,v\in [0,1].$$
Note that
\begin{eqnarray*}
\frac{\partial^2}{\partial u\partial v} \Phi_\Sigma(\phi_{\sigma_{11}}^{-1}(u),\phi_{\sigma_{22}}^{-1}(v)) &=&
\frac{\partial \phi_{\sigma_{11}}^{-1}}{\partial u}\frac{\partial \phi_{\sigma_{22}}^{-1}}{\partial v}\frac{\partial^2 \Phi_\Sigma}{\partial x\partial y} \\&=&
\left.\left(\frac{e^{-\frac{x^2}{2\sigma_{11}}-\frac{y^2}{2\sigma_{22}}}}{2\pi\sqrt{\sigma_{11}\sigma_{22}}} \right)^{-1} \frac{e^{-\frac{1}{2}(x,y)\Sigma^{-1} {x \choose y}}}{2\pi(\det\Sigma)^{\frac{1}{2}}}\right|_{{x \choose y} = {\sqrt{\sigma_{11}}\phi^{-1}(u) \choose \sqrt{\sigma_{22}}\phi^{-1}(v)}} \\&=&
\left.\sqrt{\frac{\sigma_{11}\sigma_{22}}{\det\Sigma}} e^{-\frac{1}{2}z^t(\Sigma^{-1}-D^{-2})z}\right|_{z = {x \choose y}},
\end{eqnarray*}
with
\begin{equation*}
D \gl \left(
\begin{array} {cc}
\sqrt{\sigma_{11}} & 0 \\
0 & \sqrt{\sigma_{22}}
\end{array}
\right).
\end{equation*}
We have
\begin{eqnarray*}
z^t(\Sigma^{-1} - D^{-2})z &=& \left(D{\phi^{-1}(u) \choose \phi^{-1}(v)}\right)^t (\Sigma^{-1} - D^{-2})D{\phi^{-1}(u) \choose \phi^{-1}(v)} \\&=&
{\phi^{-1}(u) \choose \phi^{-1}(v)}^t D^t(\Sigma^{-1}-D^{-2})D{\phi^{-1}(u) \choose \phi^{-1}(v)}.
\end{eqnarray*}
Furthermore, note
\begin{eqnarray*}
&& D^t(\Sigma^{-1}-D^{-2})D = D\Sigma^{-1}D - I = (D^{-1}\Sigma D^{-1})^{-1} - I \\&=&
\frac{1}{1-a^2}\left(
\begin{array}{cc}
1 & -a \\
-a & 1
\end{array}
\right) -I,\quad a\gl\frac{\sigma_{12}}{\sqrt{\sigma_{11}\sigma_{22}}}
\end{eqnarray*}
and
$$\frac{\det\Sigma}{\sigma_{11}\sigma_{22}} = 1-a^2.$$
This proves, together with the fact that
\begin{equation*}
\begin{array}{lcl}
(0,1)^2 &\to& \mathds{R}^2 \\
(u,v) &\mapsto& (\phi^{-1}(u),\phi^{-1}(v))
\end{array}
\end{equation*}
is a surjection, our claim.
\end{proof}

\begin{theorem} \label{CopGaussTheorem}
Let $F\in\mathcal{M}^c$ be a distribution with square integrable margins, i.e. 
$$\int (x^2 + y^2)\,dF(x,y) < \infty.$$
Then,
\begin{equation} \label{mainConvEq}
\sup_{0\leq u,v \leq 1} |\mathcal{T} \Psi(\lambda,F) - \mathcal{T} N(0,\Sigma)| \to 0, \quad \lambda\to\infty
\end{equation}
with
\begin{equation} \label{matrixEl}
\Sigma\gl \left(
\begin{array}{cc}
\int x^2\,dF(x,y) & \int xy\,dF(x,y) \\
\int xy\,dF(x,y) & \int y^2\,dF(x,y)
\end{array}
\right).
\end{equation}
\end{theorem}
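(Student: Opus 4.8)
The strategy is to reduce the convergence of copulas to a convergence in distribution and then invoke Proposition \ref{WeakCopConvPro}. Concretely, I would first observe that by Lemma \ref{contLem} we have $\Psi(\lambda,F)\in\mathcal{M}^c$ for every $\lambda>0$, so $\mathcal{T}\Psi(\lambda,F)$ is well defined; and by Remark \ref{posStricRem} together with the Cauchy–Schwarz inequality applied to $F$ (whose margins are nondegenerate because they are continuous), the matrix $\Sigma$ in \eqref{matrixEl} satisfies $\sigma_{11}\sigma_{22}>0$ and is positive semidefinite, so $N(0,\Sigma)\in\mathcal{M}^c$ and $\mathcal{T}N(0,\Sigma)$ is well defined as well. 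The heart of the argument is then the claim
\begin{equation*}
Z_\lambda \gl \frac{1}{\sqrt{\lambda}}\,S_\lambda \quad\xrightarrow{\ \mathrm d\ }\quad N(0,\Sigma),\qquad \lambda\to\infty,
\end{equation*}
where $S_\lambda$ has law $\Psi(\lambda,F)$, i.e.\ $S_\lambda=\sum_{j=1}^{N_\lambda}X_j$ with $N_\lambda$ Poisson$(\lambda)$ independent of the i.i.d.\ sequence $(X_j)$ with law $F$. Since scaling each coordinate by the positive constant $1/\sqrt{\lambda}$ does not change the copula, $\mathcal{T}\Psi(\lambda,F)=\mathcal{T}P^{Z_\lambda}$, and once the displayed weak convergence is established, Proposition \ref{WeakCopConvPro} delivers exactly the uniform convergence \eqref{mainConvEq}.

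To prove the weak convergence I would compute characteristic functions. For $\theta\in\mathds{R}^2$, conditioning on $N_\lambda$ gives the classical formula
\begin{equation*}
E\!\left[e^{i\langle\theta,Z_\lambda\rangle}\right]=\exp\!\Big(\lambda\big(\widehat F(\theta/\sqrt\lambda)-1\big)\Big),
\qquad \widehat F(u)\gl\int e^{i\langle u,(x,y)\rangle}\,dF(x,y).
\end{equation*}
Since $F$ has square-integrable (hence also first-moment finite) margins, a second-order Taylor expansion of $\widehat F$ at the origin yields $\widehat F(u)=1+i\langle m,u\rangle-\tfrac12 u^t M u+o(|u|^2)$ as $u\to0$, where $m$ is the mean vector of $F$ and $M=(\int x^2\,dF,\int xy\,dF;\int xy\,dF,\int y^2\,dF)=\Sigma$ is its (non-centred) second-moment matrix. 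Substituting $u=\theta/\sqrt\lambda$ and multiplying by $\lambda$ gives $\lambda(\widehat F(\theta/\sqrt\lambda)-1)=i\sqrt\lambda\,\langle m,\theta\rangle-\tfrac12\theta^t\Sigma\theta+o(1)$. Here one must be slightly careful: if $m\neq0$ the linear term blows up, so strictly speaking $Z_\lambda$ does not converge but $Z_\lambda-\sqrt\lambda\,m$ does, to $N(0,\Sigma-mm^t)$ — however, a deterministic shift of a random vector changes neither its copula nor, after the shift is absorbed, the relevant second-moment structure; alternatively, and more cleanly, one notes that replacing $X_j$ by $X_j$ is already fine because the copula of $P^{Z_\lambda}$ equals the copula of $P^{Z_\lambda-\sqrt\lambda m}$, and the latter converges to $N(0,\Sigma-mm^t)$, whose copula by Proposition \ref{GaussCopEqProp} is determined by $a=(\sigma_{12}-m_1m_2)/\sqrt{(\sigma_{11}-m_1^2)(\sigma_{22}-m_2^2)}$. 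I would therefore instead center from the start: set $\widetilde X_j=X_j-m$ and note $S_\lambda-N_\lambda m$ is a compound Poisson sum with centred jumps whose covariance matrix is $\mathrm{Cov}(F)$; but since the copula target claimed in the theorem is $\mathcal{T}N(0,\Sigma)$ with the \emph{non-centred} $\Sigma$, the correct route is to keep $\Sigma$ as stated and verify via Proposition \ref{GaussCopEqProp} that the centred and non-centred Gaussians have the same copula when — and this is the subtle point — they need not in general. Thus the honest reading of the theorem forces $F$ to have \emph{mean zero} margins, or else the stated $\Sigma$ must be interpreted as the covariance; I would resolve this by assuming (as is implicit in the paper's setup for $\rho$) that the relevant quantities are the raw second moments and that the intended normalisation is $Z_\lambda=S_\lambda/\sqrt\lambda$ with the linear drift handled by the copula-invariance under translation, concluding $\mathcal{T}P^{Z_\lambda}\to\mathcal{T}N(0,\Sigma)$.

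\textbf{Main obstacle.} The genuinely delicate step is the interchange of limits reflected above: turning the pointwise convergence of characteristic functions into weak convergence (Lévy's continuity theorem suffices, provided the limit is continuous at $0$, which it is) and, more importantly, correctly tracking how the deterministic drift $\sqrt\lambda\,m$ interacts with the copula — i.e.\ justifying that $\mathcal{T}P^{Z_\lambda}$ converges even though $Z_\lambda$ itself may fail to converge in distribution. The clean fix is: write $Z_\lambda=W_\lambda+\sqrt\lambda\,m$ with $W_\lambda=(S_\lambda-N_\lambda m)/\sqrt\lambda+ (N_\lambda-\lambda)m/\sqrt\lambda$; the first summand's law is $\Psi$ of a centred jump law and converges to $N(0,\mathrm{Cov}(F))$, the second is an independent scalar Gaussian-limit term $\sim N(0,m m^t)$ by the CLT for $N_\lambda$, so $W_\lambda\to N(0,\mathrm{Cov}(F)+mm^t)=N(0,\Sigma)$; and since $\mathcal{T}$ is translation-invariant, $\mathcal{T}P^{Z_\lambda}=\mathcal{T}P^{W_\lambda}\to\mathcal{T}N(0,\Sigma)$ by Proposition \ref{WeakCopConvPro}. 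Everything else (the Taylor expansion, the conditioning formula, the uniformity coming for free from Proposition \ref{WeakCopConvPro}) is routine.
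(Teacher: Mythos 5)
Your overall route is the paper's route: replace $\Psi(\lambda,F)$ by the law of $Z_\lambda=(S_\lambda-\lambda m)/\sqrt{\lambda}$ (the copula is invariant under this affine map), prove weak convergence to a Gaussian via characteristic functions and L\'evy's continuity theorem, and then invoke Proposition \ref{WeakCopConvPro} for the uniform convergence of the copulas. However, in the middle of your argument you introduce a genuine error and then a gap in your attempted repair. The error: you assert that $Z_\lambda-\sqrt{\lambda}\,m$ (equivalently $(S_\lambda-\lambda m)/\sqrt{\lambda}$) converges to $N(0,\Sigma-mm^t)$ and conclude that the theorem as stated would require mean-zero margins or a reinterpretation of $\Sigma$ as a covariance. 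This contradicts your own (correct) Taylor expansion: since $\widehat F(u)=1+i\langle m,u\rangle-\tfrac12 u^t\Sigma u+o(|u|^2)$ with $\Sigma$ the \emph{raw} second-moment matrix, subtracting the drift $i\sqrt{\lambda}\langle m,\theta\rangle$ from $\lambda(\widehat F(\theta/\sqrt{\lambda})-1)$ leaves exactly $-\tfrac12\theta^t\Sigma\theta+o(1)$, so the centred, scaled compound Poisson sum converges to $N(0,\Sigma)$ with the non-centred $\Sigma$ of \eqref{matrixEl}. This is precisely the point of the compound Poisson CLT: the Poisson fluctuation of the number of jumps contributes the extra $mm^t$, i.e. $\mathrm{Var}\,S_\lambda=\lambda\,E[X_1X_1^t]$, not $\lambda\,\mathrm{Cov}(X_1)$. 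The theorem needs no mean-zero hypothesis, and your digression questioning its statement is unfounded; the paper's proof simply centres by $EZ_\lambda$ and reads off $\exp(-\tfrac12 EY_v^2)$ directly from the expansion, which is what you should have done.

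Your final ``fix'' does land on the correct limit $N(0,\mathrm{Cov}(F)+mm^t)=N(0,\Sigma)$, but it contains an unjustified step: the two summands in $W_\lambda=\big(\sum_{j=1}^{N_\lambda}(X_j-m)\big)/\sqrt{\lambda}+(N_\lambda-\lambda)m/\sqrt{\lambda}$ are \emph{not} independent for finite $\lambda$ (both involve $N_\lambda$), so you cannot simply add the two marginal Gaussian limits. The conclusion can be salvaged, e.g.\ by noting that $\big(\sum_{j=1}^{N_\lambda}(X_j-m),\,N_\lambda\big)$ is itself a compound Poisson sum with jumps $\big((X-m),1\big)$, whose joint CLT gives asymptotic joint normality with vanishing cross-covariance because $E[X-m]=0$; but at that point you have reproduced, in a roundabout way, the one-line characteristic-function computation that already settles the matter. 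Apart from this detour the remaining ingredients (well-definedness of the copulas via Lemma \ref{contLem}, translation/scaling invariance of $\mathcal{T}$, and the reduction to Proposition \ref{WeakCopConvPro}) are correct and coincide with the paper's argument.
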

\begin{proof}
Denote with $\sum_{j=1}^{N_t(\lambda)} X_j$ the corresponding compound Poisson process to $\Psi(\lambda,F)$, i.e.
$$P^{Z_\lambda} = e^{-\lambda} \sum_{k=0}^\infty\frac{\lambda^kF^{\ast k}}{k!}=\Psi(\lambda,F)$$
with 
$$Z_\lambda\gl\sum_{j=1}^{N_1(\lambda)}X_j,\quad \lambda>0.$$
Next, define a map
\begin{equation*}
\begin{array}{llcl}
\varphi_\lambda \,:\,&\mathds{R}^2 &\to& \mathds{R}^2 \\
&(x,y) &\mapsto& \frac{1}{\sqrt{\lambda}}((x,y) - EZ_\lambda).
\end{array}
\end{equation*}
It suffices to show that
\begin{equation} \label{weakConvEq}
\varphi_\lambda(Z_\lambda) \,\,{\buildrel d \over \to} \,\, N(0,\Sigma),\quad \lambda\to\infty
\end{equation}
because Theorem 2.4.3 in Nelsen \cite{Nel} yields
$$\mathcal{T}P^{\varphi_\lambda(Z_\lambda)} = \mathcal{T} P^{Z_\lambda} = \mathcal{T}\Psi(\lambda,F),$$
so that Proposition \ref{WeakCopConvPro} can be applied. Note that the latter application of $\mathcal{T}$ is allowed because of Lemma \ref{contLem} and the fact that $\varphi_\lambda$ is injective.

We verify (\ref{weakConvEq}) by use of L\'{e}vy's continuity theorem: For a convenient notation, set 
$$Y_v\gl \left<v,X_1\right>,\quad  v\in\mathds{R}^2$$
and observe
$$\mathcal{F} N(0,\Sigma) (v) = \exp\left(-\frac{v^t\Sigma v}{2}\right) = \exp\left(-\frac{EY_v^2}{2}\right).$$
Hence, it suffices to establish for every $v\in\mathds{R}^2$ the convergence
$$\mathcal{F}[Z_\lambda](v) \to \exp\left(-\frac{EY_v^2}{2}\right),\quad \lambda \to \infty.$$
Write for this
\begin{eqnarray*}
E\left(\exp\left(i\left<v,\varphi_\lambda(Z_\lambda)\right>\right)\right) &=& E\left(\exp\left(\frac{i}{\sqrt{\lambda}} (\left<v,Z_\lambda\right> - \lambda E\left<v,X_1\right>)\right)\right) \\&=&
E\left(\exp\left(\frac{i}{\sqrt{\lambda}} \sum_{j=1}^{N_1(\lambda)} \left<v,X_j\right>\right)\right) \exp(-i\sqrt{\lambda} E\left<v,X_1\right>) \\&=&
\exp\left(\lambda(\mathcal{F}[Y_v](\lambda^{-\frac{1}{2}})-1)\right) \exp(-i\sqrt{\lambda} EY_v) \\&=&
\exp\left(\lambda\left(\frac{i}{\sqrt{\lambda}} EY_v - \frac{1}{2\lambda} EY_v^2 + o(\lambda^{-1})\right)\right) \exp(-i\sqrt{\lambda} EY_v) \\&=&
\exp\left(-\frac{EY_v^2}{2} + o(1)\right),\quad \lambda\to\infty.
\end{eqnarray*}

This proves the desired convergence. Note that we used Sato \cite{Sat}[Theorem 4.3] for the third equal sign and Chow, Teicher \cite{ChoTei}[8.4 Theorem 1] for the fourth equal sign.

\end{proof}

\begin{cor} \label{copulaNotInjectivCor}
Let $F\in\mathcal{M}^c$ be a distribution with square integrable margins. Then, there exists another such distribution $G$ and a number $\Lambda > 0$ such that $\mathcal{T}F = \mathcal{T}G$, but
$$\mathcal{T}\Psi(\lambda,F) \neq \mathcal{T}\Psi(\lambda,G),\quad \lambda\geq\Lambda.$$
To be more precise, there exists $u_0,v_0\in(0,1)$ such that
$$\lim_{\lambda\uparrow \infty}\left|\mathcal{T}\Psi(\lambda,F) - \mathcal{T}\Psi(\lambda,G)\right|(u_0,v_0)>0.$$
Additionally, we can choose $G\in\mathcal{M}_+^c$ if $F\in\mathcal{M}_+^c$.
\end{cor}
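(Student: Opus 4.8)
The plan is to route everything through the two asymptotic results already at hand: reduce the corollary to the single inequality $\rho(F)\neq\rho(G)$ for a suitably chosen $G$, and then build such a $G$ by shifting the margins of $F$ while leaving its copula untouched.

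First I would set up the reduction. By Theorem~\ref{CopGaussTheorem}, $\mathcal{T}\Psi(\lambda,F)\to\mathcal{T}N(0,\Sigma_F)$ uniformly on $[0,1]^2$ with $\Sigma_F$ the second-moment matrix in (\ref{matrixEl}), and the same holds for any admissible $G$ with limit $\mathcal{T}N(0,\Sigma_G)$. Since the margins of $F\in\mathcal{M}^c$ are continuous, hence non-degenerate, $\int x^2\,dF(x,y)>0$ and $\int y^2\,dF(x,y)>0$; thus $\Sigma_F$ is positive semidefinite with strictly positive diagonal and $N(0,\Sigma_F)\in\mathcal{M}^c$ by Remark~\ref{posStricRem}. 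Combining Proposition~\ref{GaussCopEqProp} with Remark~\ref{posStricRem}(ii)--(iii) (the latter to treat the boundary cases $|\rho|=1$, where the Gaussian matrices are only semidefinite) gives the equivalence $\mathcal{T}N(0,\Sigma_F)=\mathcal{T}N(0,\Sigma_G)\iff\rho(F)=\rho(G)$, because the ''$a$''-value of $\Sigma_F$ in Proposition~\ref{GaussCopEqProp} is exactly $\rho(F)$. Hence it suffices to produce $G\in\mathcal{M}^c$ with square integrable margins, $\mathcal{T}G=\mathcal{T}F$ and $\rho(G)\neq\rho(F)$: all copulas coincide on the boundary $\partial[0,1]^2$, so two distinct ones differ at some $(u_0,v_0)\in(0,1)^2$, and passing to the limit in the uniform convergence of Theorem~\ref{CopGaussTheorem} yields
\begin{equation*}
\lim_{\lambda\uparrow\infty}\bigl|\mathcal{T}\Psi(\lambda,F)-\mathcal{T}\Psi(\lambda,G)\bigr|(u_0,v_0)=\bigl|\mathcal{T}N(0,\Sigma_F)-\mathcal{T}N(0,\Sigma_G)\bigr|(u_0,v_0)>0,
\end{equation*}
which in particular makes $\mathcal{T}\Psi(\lambda,F)\neq\mathcal{T}\Psi(\lambda,G)$ for all $\lambda$ beyond some $\Lambda$.

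For the construction, let $(X,Y)\sim F$ and, for $a,b\in\mathds{R}$, denote by $G_{a,b}$ the law of $(X+a,Y+b)$. Adding a constant is a strictly increasing transformation in each coordinate, so by invariance of copulas under such transformations (Nelsen~\cite{Nel}[Theorem 2.4.3], as already used in the proof of Theorem~\ref{CopGaussTheorem}) one has $\mathcal{T}G_{a,b}=\mathcal{T}F$, while obviously $G_{a,b}\in\mathcal{M}^c$ with square integrable margins. With $m_1=EX$, $m_2=EY$, $s_1=EX^2$, $s_2=EY^2$, $c=EXY$ we get
\begin{equation*}
\rho(G_{a,b})^2=\frac{(c+bm_1+am_2+ab)^2}{(s_1+2am_1+a^2)(s_2+2bm_2+b^2)},
\end{equation*}
with both denominator factors strictly positive because $X+a$ and $Y+b$ are non-degenerate, and I would show that this is not constant in $(a,b)$. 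If it were, equal to $\rho(F)^2$, then setting $b=0$ turns the identity into a polynomial identity in $a$ whose numerator has degree $\le1$ and whose denominator has degree exactly $2$; when $m_2\neq0$ the comparison of coefficients forces $s_1=m_1^2$, i.e.\ $\Var(X)=0$, contradicting the continuity of the first margin of $F$, and by symmetry the same argument applies when $m_1\neq0$; finally, if $m_1=m_2=0$ then $\rho(G_{a,0})^2=c^2/\bigl((a^2+s_1)s_2\bigr)\to0$ as $a\to\infty$, so constancy would force $c=0$ and then $\rho(G_{1,1})^2=\bigl((s_1+1)(s_2+1)\bigr)^{-1}>0$ — a contradiction in every case. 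Choosing $(a,b)$ with $\rho(G_{a,b})^2\neq\rho(F)^2$ (hence $\rho(G_{a,b})\neq\rho(F)$) and setting $G\gl G_{a,b}$ finishes this part. If moreover $F\in\mathcal{M}_+^c$, then $X,Y\geq0$ with non-degenerate margins, so $m_1,m_2>0$; restricting to $a\geq0$, $b=0$ keeps the shifted jump in $\mathds{R}_+^2$, so $G_{a,0}\in\mathcal{M}_+^c$, and the coefficient argument above (genuinely of degree $2$ in $a$ since $m_2>0$) shows $\rho(G_{a,0})^2$ is non-constant on $[0,\infty)$, so one may pick $a>0$ with $\rho(G_{a,0})\neq\rho(F)$ and take $G\gl G_{a,0}$.

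I expect the only real subtlety to be in the reduction step: one must be careful that $N(0,\Sigma_F)$ always lies in $\mathcal{M}^c$ (equivalently $\int x^2\,dF(x,y)\cdot\int y^2\,dF(x,y)>0$, which holds because the margins of $F$ are continuous) and that the equivalence $\mathcal{T}N(0,\Sigma_F)=\mathcal{T}N(0,\Sigma_G)\iff\rho(F)=\rho(G)$ remains valid in the degenerate case $|\rho(F)|=1$, which is exactly the point where Remark~\ref{posStricRem} must supplement Proposition~\ref{GaussCopEqProp}. Once the reduction to $\rho$ is secured, the shift construction is entirely elementary.
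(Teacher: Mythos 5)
Your proposal is correct and follows essentially the same route as the paper: take $G$ to be a translate of $F$ (so $\mathcal{T}G=\mathcal{T}F$ by Nelsen's Theorem 2.4.3, and $G\in\mathcal{M}_+^c$ for nonnegative shifts), and reduce via Theorem \ref{CopGaussTheorem} and Proposition \ref{GaussCopEqProp} to showing that the shift changes $\rho$. The only difference is cosmetic: the paper rules out constancy of $\widetilde{\rho}$ by letting $c\uparrow\infty$ and then $d\uparrow\infty$, whereas you compare polynomial coefficients with a case analysis on the means (your aside that the numerator has degree $\le 1$ is a slip --- it is the linear form inside the square --- but the resulting conclusion $s_1=m_1^2$, i.e.\ $\Var(X)=0$, is correct), and you additionally spell out the boundary case $|\rho|=1$ and the interior point $(u_0,v_0)$, which the paper leaves implicit.
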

\begin{proof}
For any $c,d\geq 0$ set $G\gl \delta_{(c,d)}\ast F\in\mathcal{M}^c$ and observe $G\in \mathcal{M}_+^c$ if $F\in\mathcal{M}_+^c$. Because of Theorem 2.4.3 in Nelsen \cite{Nel} we have $\mathcal{T}F = \mathcal{T}G$. Next, let $X$ be a random variable with $X\sim F$, so that $(X_1+c,X_2+d)\sim G$.
Due to Theorem \ref{CopGaussTheorem} together with Proposition \ref{GaussCopEqProp}, we only have to show that we can choose $c$ and $d$ such that $\rho(F) \neq \rho(G)$. For this purpose, note that
\begin{eqnarray*}
E(X_1+c)^2 &=& EX_1^2 + 2cEX_1 + c^2 \\
E(X_2+d)^2 &=& EX_2^2 + 2dEX_2 + d^2 \\
E(X_1+c)(X_2+d) &=& EX_1X_2 + cEX_2 + dEX_1 + cd.
\end{eqnarray*}
Set
\begin{equation*}
\begin{array}{rrcl}
\widetilde{\rho}: & \mathds{R}_+^2 &\to& [0,1] \\
&(c,d) &\mapsto& \left(\frac{E(X_1+c)(X_2+d)}{\sqrt{E(X_1+c)^2E(X_2+d)^2}}\right)^2
\end{array}
\end{equation*}
and observe
$$\widetilde{\rho}(c,d) = \frac{(EX_1X_2 + cEX_2 + dEX_1 + cd)^2}{(EX_1^2 + 2cEX_1 + c^2)(EX_2^2 + 2dEX_2 + d^2)}.$$
Assume $(c,d)\mapsto\widetilde{\rho}(c,d)$ is constant. Then
$$d\mapsto \widecheck{\rho}(d)\gl \lim_{c\uparrow\infty}\widetilde{\rho}(c,d) = \frac{(EX_2 + d)^2}{E(X_2+d)^2}$$
is also constant. This implies
$$\frac{(EX_2)^2}{EX_2^2} = \widecheck{\rho}(0) = \lim_{d\uparrow\infty}\widecheck{\rho}(d) = 1$$
which is only possible if $\text{Var}X_2 = 0$, i.e. $X_2$ is a.s. constant which is a contradiction to the assumed continuity of the second marginal distribution of $F$.
\end{proof}

\begin{rem} \label{copNotFunc} \rm
The equality
$$\mathcal{T}(\Psi(\lambda,F)) = \mathcal{T}(\Psi(\lambda,\mathcal{T}F)),\quad F\in\mathcal{M}_+^c,\quad \lambda>0$$
does not hold in view of Corollary \ref{copulaNotInjectivCor}. This implies that a map
$$\Psi_{\mathfrak{C}}\,:\,\mathfrak{C}\mapsto \mathfrak{C}$$
with the property
$$\Psi_{\mathfrak{C}}(C) = \mathcal{T}\Psi(\lambda,F),\quad C=\mathcal{T}F,\quad F\in\mathcal{M}_+^c$$
is not well-defined.
\end{rem}

\section{Two examples}
Consider the introduction of this paper for the motivation of the following two examples.

\begin{prop} \label{CopulaRhoPro}
We have 
$$\frac{1}{2}\leq\rho(C)\leq 1,\quad C\in\mathfrak{C}.$$ 
\end{prop}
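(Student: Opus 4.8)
The plan is to exploit that a copula has uniform margins on $[0,1]$, which pins down the denominator in the definition of $\rho$ and reduces the claim to a statement about the single number $E[UV]$ for $(U,V)\sim C$. Concretely, for $C\in\mathfrak{C}$ both margins are $U[0,1]$, so
$$\int x^2\,dC(x,y)=\int_0^1 x^2\,dx=\frac13=\int y^2\,dC(x,y),$$
and therefore
$$\rho(C)=3\int xy\,dC(x,y)=3\,E[UV],\qquad (U,V)\sim C.$$
Hence $\tfrac12\le\rho(C)\le1$ is equivalent to $\tfrac16\le E[UV]\le\tfrac13$, and I would prove these two bounds separately.

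For the upper bound I would simply invoke Cauchy--Schwarz (as already remarked for general $F$ in the introduction): $E[UV]\le\sqrt{E[U^2]\,E[V^2]}=\tfrac13$, i.e. $\rho(C)\le1$. Alternatively it follows from the Fr\'echet--Hoeffding bound $C\le M$ together with the identity $E[UV]=\int_{[0,1]^2}C(u,v)\,du\,dv$ (obtained by writing $uv=\int_0^1\int_0^1\one_{\{s<u\}}\one_{\{t<v\}}\,ds\,dt$, taking expectations, and using $P(U>s,V>t)=1-s-t+C(s,t)$), combined with the elementary evaluation $\int_{[0,1]^2}\min(u,v)\,du\,dv=\tfrac13$.

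The lower bound is the real content, since for a general square-integrable $F$ Cauchy--Schwarz only gives $\rho(F)\ge-1$; the extra rigidity here comes from the uniform margins being centred at $\tfrac12$ instead of at $0$. Working in $L^2(\Omega,\mathcal{F},P)$, I would decompose $U=\tfrac12+\widetilde U$ and $V=\tfrac12+\widetilde V$ with $\widetilde U,\widetilde V$ orthogonal to the constants, so that
$$E[UV]=\tfrac14+E[\widetilde U\,\widetilde V]\ge\tfrac14-\sqrt{\Var U\cdot\Var V}=\tfrac14-\tfrac1{12}=\tfrac16,$$
using $\Var U=\Var V=\tfrac13-\tfrac14=\tfrac1{12}$ and Cauchy--Schwarz applied to $\widetilde U,\widetilde V$; thus $\rho(C)\ge\tfrac12$. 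Equivalently one may read this off the Fr\'echet--Hoeffding lower bound $C\ge W$ together with $\int_{[0,1]^2}\max(u+v-1,0)\,du\,dv=\tfrac16$.

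I do not expect a genuine obstacle here; the one point worth spelling out is \emph{why} the lower bound is $\tfrac12$ and not the naive $-1$ — precisely the centering of the margins — and that both bounds are sharp: $C=M$ forces $U=V$ a.s., giving $\rho=1$, while $C=W$ forces $V=1-U$ a.s., giving $\rho=3(\tfrac12-\tfrac13)=\tfrac12$. So the argument would close by exhibiting these two copulas, which shows that the constants $\tfrac12$ and $1$ in the proposition are optimal.
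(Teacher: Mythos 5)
Your proposal is correct. Your main argument is in essence the paper's: both reduce, via the uniform margins, to showing $\tfrac16\le E[UV]\le\tfrac13$ and obtain the lower bound from a single application of Cauchy--Schwarz; the only difference is where you apply it. The paper applies Cauchy--Schwarz to the pair $(U,1-V)$, using that $1-V\sim U[0,1]$ so that $E[U(1-V)]\le\sqrt{E[U^2]E[(1-V)^2]}=\tfrac13$ and hence $E[UV]\ge EU-EU^2=\tfrac16$, whereas you apply it to the centered variables $U-\tfrac12$, $V-\tfrac12$, i.e. you bound the covariance by $-\sqrt{\Var U\,\Var V}=-\tfrac1{12}$; these are two phrasings of the same computation (both equality cases force $U=1-V$ a.s., i.e. $C=W$). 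Your alternative route is genuinely different and worth noting: the identity $E[UV]=\int_{[0,1]^2}C(u,v)\,du\,dv$ together with the Fr\'echet--Hoeffding bounds $W\le C\le M$ gives $\tfrac16\le E[UV]\le\tfrac13$ directly and exhibits $W$ and $M$ as the extremal copulas, which also explains structurally why the lower bound is $\tfrac12$ rather than $-1$; the paper does not argue this way here, but does record $\rho(W)=\tfrac12$, $\rho(\Pi)=\tfrac34$, $\rho(M)=1$ later (Example \ref{ClaytonExample}), so your sharpness remark is consistent with, and slightly anticipates, that discussion.
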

\begin{proof}
The upper bound is a direct consequence of the Cauchy-Schwarz inequality. For the lower bound suppose $(U,V)\sim C$, i.e. in particular $U,V\sim U[0,1]$. We have to show that
$$EUV\geq\frac{1}{6}.$$
Cauchy-Schwarz yields
$$E(1-V)U \leq (E(1-V)^2EU^2)^\frac{1}{2} = EU^2.$$
This implies
$$EUV\geq EU - EU^2 = \frac{1}{2} - \frac{1}{3} = \frac{1}{6}$$
which proves the claim.
\end{proof}

\begin{exa} \label{ClaytonExample}
Let $(C_\theta)_{\theta\in[-1,\infty)\backslash\{0\}}$ be the family of Clayton copulas. Then, we have
$$\rho(\{C_\theta\,:\, \theta\in [-1,\infty)\backslash\{0\}\} \cup \{\Pi\}\cup \{M\}) = \left[\frac{1}{2},1\right].$$
\end{exa}
\begin{proof}
First, we show the continuity of the map 
\begin{equation} \label{LamConeq}
\theta\mapsto \rho(C_\theta),\quad \theta \in [-1,\infty)\backslash\{0\}.
\end{equation}
For this purpose, choose a sequence $(\theta_n)_{n\in\mathds{N}_0} \subseteq [-1,\infty)\backslash \{0\}$ with $\theta_n\to\theta_0$. The pointwise convergence
$$C_{\theta_n}(u,v) \to C_{\theta_0}(u,v),\quad 0\leq u,v\leq 1$$
yields the convergence of measures $C_{\theta_n}\,\, {\buildrel d \over \to}\,\, C_{\theta_0}$. Define the product function
$$H\,:\,[0,1]^2\to [0,1],\quad (u,v)\mapsto uv.$$
Since $H$ is continuous, we have $C_{\theta_n}^H\,\, {\buildrel d \over \to}\,\, C_{\theta_0}^H$, which implies
\begin{equation} \label{copWeakEq}
C_{\theta_n}(H\leq t) \to C_\theta (H\leq t), \quad \theta_n \to \theta\quad (t\text{-a.e.}).
\end{equation}
Finally we can write
\begin{eqnarray*}
\left|\int H\,dC_{\theta_n} - \int H\,dC_\theta\right| &=& \left|\int_0^1 C_{\theta_n}(H>t)\,dt - \int_0^1 C_\theta(H>t)\,dt\right| \\ &\leq&
\int_0^1|C_{\theta_n}(H\leq t) - C_\theta(H\leq t)|\,dt \to 0,\quad n\to\infty
\end{eqnarray*}
where the last convergence holds because of (\ref{copWeakEq}) and dominated convergence. This proves the claimed continuity. Next, observe the pointwise convergences
\begin{eqnarray*}
&&C_{\theta_n} \to \Pi,\quad \theta_n\to 0,\quad \theta_n \in [-1,\infty)\backslash\{0\}, \\
&&C_{\theta_n} \to M,\quad \theta_n \to\infty
\end{eqnarray*}
and $C_{-1} = W$, cf. Nelsen \cite{Nel} (4.2.1). This completes, together with the continuity of (\ref{LamConeq}), 
$$\rho(W)=\frac{1}{2},\quad \rho(\Pi) = \frac{3}{4},\quad \rho(M)=1$$
and Proposition \ref{CopulaRhoPro},
the proof.
\end{proof}

\begin{exa} \label{remainderExample}
Let $\{U_k\,:\,k\in\mathds{N}\}\cup\{V_k\,:\,k\in\mathds{N}\}$ be a family of i.i.d $U[0,1]$ distributed random variables and fix any $0<\epsilon<1$. Set
$$T_\epsilon\gl \inf\{k\in\mathds{N}\,:\,|U_k-V_k|\geq \epsilon\}.$$
Then, the following two statements are true:
\begin{itemize}
\item[(i)]
$(U_{T_\epsilon},V_{T_\epsilon})\sim U(I_\epsilon)$ with $I_\epsilon=\{(u,v)\in [0,1]^2\,:\,|u-v|\geq \epsilon\}$.
\item[(ii)]
Set
\begin{equation*}
\begin{array}{rrcl}
\varphi: & (0,1) &\to& [0,1] \\
&\epsilon &\mapsto& \rho(P^{(U_{T_\epsilon},V_{T_\epsilon})}).
\end{array}
\end{equation*}
It holds $(0,\frac{3}{4})\subseteq \varphi((0,1))$.
\end{itemize}
\end{exa}
\begin{proof}
To have an unambiguous notation in this proof, the two dimensional Lebesgue measure is denoted in the following with $\mathfrak{l}^2$ instead of $\lambda^2$.

(i): Let $A\in\mathcal{B}^2$ and write
\begin{eqnarray*}
&&P((U_{T_\epsilon},V_{T_\epsilon})\in A)  \\
&=& \sum_{k=1}^\infty P((U_{T_\epsilon},V_{T_\epsilon})\in A|T_\epsilon = k) P(T_\epsilon=k) \\&=&
\sum_{k=1}^\infty  P((U_k,V_k)\in A|(U_1,V_1)\in I_\epsilon^c,\ldots,(U_{k-1},V_{k-1})\in I_\epsilon^c,(U_k,V_k)\in I_\epsilon) P(T_\epsilon =k)\\&=&
\sum_{k=1}^\infty P((U_k,V_k)\in A|(U_k,V_k)\in I_\epsilon) P(T_\epsilon = k)\\&=&
\frac{P((U_1,V_1)\in A\cap I_\epsilon)}{P((U_1,V_1)\in I_\epsilon)}.
\end{eqnarray*}
Note that $P(T_\epsilon=\infty)=0$. \\
(ii): Obviously $\mathfrak{l}^2(I_\epsilon) = (1-\epsilon)^2$ holds.
Next, we obtain
\begin{eqnarray*}
EU_{T_\epsilon}^2 &=& (\mathfrak{l}^2(I_\epsilon))^{-1}\int_{[0,1]^2} u^2 \one_{I_\epsilon}(u,v) d\mathfrak{l}^2(u,v) \\ &=&
(1-\epsilon)^{-2} \left(\int_\epsilon^1\int_{0}^{u-\epsilon} u^2\,dv\,du + \int_\epsilon^1\int_{0}^{v-\epsilon} u^2\,du\,dv\right) \\&=&
\frac{\epsilon^2}{6}+\frac{1}{3}
\end{eqnarray*}
and
\begin{eqnarray*}
E(U_{T_\epsilon}V_{T_\epsilon}) &=& (\mathfrak{l}^2(I_\epsilon))^{-1} \int_{[0,1]^2}uv\one_{I_\epsilon}(u,v)\,d\mathfrak{l}^2(u,v) \\&=&
2(1-\epsilon)^{-2}\int_\epsilon^1\int_{0}^{u-\epsilon} uv\,dv\,du \\&=&
\frac{(1-\epsilon)(3+\epsilon)}{12}.
\end{eqnarray*}
A symmetry argument yields $E(V_{T_\epsilon}^2) = E(U_{T_\epsilon}^2)$, so that we have
$$\varphi(\epsilon) = \frac{(1-\epsilon)(3+\epsilon)}{2(\epsilon^2+2)}.$$
Continuity of $\varphi$ and
$$0 = \lim_{\epsilon\uparrow 1}\varphi(\epsilon) < \lim_{\epsilon\downarrow 0}\varphi(\epsilon) = \frac{3}{4}$$
proves (ii).
\end{proof}

\section{Simulation study}
In this section, we investigate the convergence (\ref{mainConvEq}) in Theorem \ref{CopGaussTheorem} by means of numerical MATLAB simulations. For this purpose, choose
$$F = F_{\theta} = C_{\theta},\quad \theta = 0,1,2,5,$$
i.e. the jump distribution is a two dimensional Clayton Copula, compare Section \ref{defSec}. Since we need for the convergence (\ref{mainConvEq}) the values of the matrix (\ref{matrixEl}), consider the following proposition.
\begin{prop} \label{EUVProp}
Let $C$ be a two dimensional Copula with existing first partial derivatives. Further, let $U,V$ be to random variables, such that $P^{(U,V)}=C$. Then, we have
\begin{itemize}
\item[(i)]
$P(V\leq v| U=u) = \frac{\partial C}{\partial u}(u,v),\quad 0\leq u,v \leq 1,$
\item[(ii)]
$E(UV)=\frac{1}{2} - \int_0^1\int_0^1 u \frac{\partial C}{\partial u}(u,v)\,dv\,du.$
\end{itemize}
\end{prop}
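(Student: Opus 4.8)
For part (i), the plan is to recognize that $\frac{\partial C}{\partial u}(u,v)$ is, for each fixed $u$, the conditional distribution function of $V$ given $U=u$. I would argue this via the standard disintegration/conditional-density argument: write $P(U\le u, V\le v) = C(u,v)$, differentiate in $u$ on both sides, and identify $\frac{\partial}{\partial u}C(u,v)$ with $\int_0^u P(V\le v\mid U=s)\,f_U(s)\,ds$ where $f_U\equiv 1$ since $U\sim U[0,1]$. By the Lebesgue differentiation theorem this gives $\frac{\partial C}{\partial u}(u,v)=P(V\le v\mid U=u)$ for a.e.\ $u$, which suffices for the integration in (ii). A cleaner route is simply to cite Nelsen \cite{Nel} (Theorem 2.2.7), where this conditional-distribution interpretation of partial derivatives of a copula is proved; given the paper's style of quoting Nelsen freely, that is probably the intended argument.

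For part (ii), the plan is to compute $E(UV)$ by conditioning on $U$: write
$$E(UV)=\int_0^1 u\,E(V\mid U=u)\,du,$$
and then express $E(V\mid U=u)$ via the tail-integral formula $E(V\mid U=u)=\int_0^1 P(V>v\mid U=u)\,dv = \int_0^1\bigl(1-\tfrac{\partial C}{\partial u}(u,v)\bigr)\,dv = 1-\int_0^1 \tfrac{\partial C}{\partial u}(u,v)\,dv$, using part (i). Substituting,
$$E(UV)=\int_0^1 u\Bigl(1-\int_0^1\tfrac{\partial C}{\partial u}(u,v)\,dv\Bigr)\,du = \int_0^1 u\,du - \int_0^1\!\!\int_0^1 u\,\tfrac{\partial C}{\partial u}(u,v)\,dv\,du = \frac12 - \int_0^1\!\!\int_0^1 u\,\tfrac{\partial C}{\partial u}(u,v)\,dv\,du,$$
which is the claim. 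An alternative, perhaps more robust, derivation avoids conditioning altogether: start from $E(UV)=\int_0^1\int_0^1 P(U>u, V>v)\,du\,dv$ (the bivariate survival/tail formula, valid for nonnegative random variables), rewrite the survival function via inclusion–exclusion as $1-u-v+C(u,v)$, and integrate; then integrate by parts in $u$ on the $C(u,v)$ term to convert $\int_0^1 C(u,v)\,du$ into $1\cdot C(1,v) - \int_0^1 u\,\tfrac{\partial C}{\partial u}(u,v)\,du = v - \int_0^1 u\,\tfrac{\partial C}{\partial u}(u,v)\,du$, and collect terms.

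The only genuine subtlety — and hence the main obstacle — is the justification of the differentiation-under-the-integral and Fubini steps, i.e.\ making sure the partial derivative $\frac{\partial C}{\partial u}$ exists and is integrable enough to support the manipulations. This is exactly why the hypothesis "$C$ has existing first partial derivatives" is imposed. Since any copula is Lipschitz (used already in Proposition \ref{WeakCopConvPro}), $\frac{\partial C}{\partial u}$ is bounded by $1$ wherever it exists, and $u\mapsto C(u,v)$ is monotone in each argument, so absolute continuity in $u$ holds and the fundamental theorem of calculus / integration by parts applies on $[0,1]$; the double integrals are over a bounded domain with a bounded integrand, so Fubini is immediate. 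Beyond this bookkeeping the argument is routine, and I expect the write-up to lean on Nelsen \cite{Nel} for part (i) and then present the short computation above for part (ii).
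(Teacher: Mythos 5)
Your proposal is correct and matches the paper's argument: part (i) is established exactly as you describe, by characterizing the conditional distribution through the integral identity $\int_0^a P(V\le v\mid U=u)\,du = C(a,v) = \int_0^a \tfrac{\partial C}{\partial u}(u,v)\,du$ (the paper checks both integrals equal $P(U\le a,\,V\le v)$ rather than differentiating, but it is the same idea), and part (ii) is obtained precisely by your main route: conditioning on $U$, writing $E(V\mid U=u)=\int_0^1\bigl(1-P(V\le v\mid U=u)\bigr)\,dv$ (the paper cites Bauer for this tail formula), and substituting (i). Your alternative survival-function derivation for (ii) is a valid extra, but not needed.
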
  
\begin{proof}
Fix any $0\leq v\leq 1$. Then, it holds for every $0\leq a \leq 1$
$$\int_0^a P(V\leq v|U=u)\, P^U(du) = \int_{\{U\leq a\}} E(\one_{\{V\leq v\}} |U)\,dP = P(U\leq a,\, V\leq v)$$
and
$$\int_0^a \frac{\partial C}{\partial u} (u,v)\,P^U(du) = \int_0^a\frac{\partial C}{\partial u}(u,v)\,du = C(a,v) = P(U\leq a,\, V\leq v)$$
which proves (i). Concerning (ii) consider
\begin{equation} \label{EUVeq1}
E(UV)=\int U E(V|U)\,dP = \int u E(V|U=u)\,P^U(du) = \int_0^1 u E(V|U=u)\, du.
\end{equation}
Further, it follows from Bauer \cite{Bau}[Theorem 23.8]
\begin{equation} \label{EUVeq2}
E(V|U=u) = \int_0^1 P^{V|U=u}((v,1])\,dv = \int_0^1 (1- P(V\leq v|U=u))\,dv.
\end{equation}
Finally, a substitution of (\ref{EUVeq2}) in (\ref{EUVeq1}) proves together with (i) the claim (ii).
\end{proof}
We have for $\theta>0$
$$C_\theta(u,v) = (u^{-\theta} + v^{-\theta} - 1)^{-\frac{1}{\theta}},\quad 0\leq u,v\leq 1.$$
This yields
$$\frac{\partial C_\theta}{\partial u} = u^{-\theta-1}(u^{-\theta} + v^{-\theta} - 1)^{-\frac{1+\theta}{\theta}},\quad 0\leq u,v\leq 1$$
and, thus, we have together with Proposition \ref{EUVProp} (ii)
\begin{equation} \label{EUVClaytonEq}
E_\theta(UV) = \frac{1}{2} - \int_0^1\int_0^1 u^{-\theta} (u^{-\theta} + v^{-\theta} - 1)^{-\frac{1+\theta}{\theta}}\,dv\,du,\quad \theta >0.
\end{equation}

\begin{table}[htbp]
\begin{equation*}
\begin{array}{c|c|c|c|c}
\theta & 0 & 1 & 2 & 5 \\
\hline
\rho(C_\theta) & 0.7500 & 0.8696 & 0.9206 & 0.9712
\end{array}.
\end{equation*}
\caption{Gaussian limit copulas as a function of $\theta$}
\label{ClaytonGaussianTable}
\end{table}

Note the connection between $E(UV)$ and Spearman's rho $\rho_S$, cf. Nelsen \cite{Nel}[Theorem 5.1.6.], i.e. we have $\rho_S = 12E(UV)-3$. However, a closed form expression of Spearman's rho for Clayton copulas is not known and hence, we perform numerical calculations of the integrals in (\ref{EUVClaytonEq}) and state the results in Table \ref{ClaytonGaussianTable}. Observe further, that it holds $\int x^2 dF_{\theta} = \int_0^1 x^2\,dx = \frac{1}{3}$, $\theta\geq 0$ and that $\theta=0$ implies that $U$ and $V$ are independent, i.e. $E_0(UV)=E_0(U)E_0(V)=\frac{1}{4}$.

We are able to simulate i.i.d. samples of a Clayton Copula $C_\theta$, $\theta>0$ by use of the following proposition.
\begin{prop}
Let $U$ and $Z$ be independent $U[0,1]$ distributed random variables. Set
$$V\gl \left(1+U^{-\theta}\left(Z^{-\frac{\theta}{1+\theta}}-1\right)\right)^{-\frac{1}{\theta}}.$$
Then, we have
$$(U,V)\,\, {\buildrel \text{d} \over =}\,\, C_\theta,\quad \theta > 0.$$
\end{prop}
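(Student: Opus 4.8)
The plan is to verify that the proposed $V$ has, jointly with $U$, the Clayton copula $C_\theta$ as its distribution function. Since $U$ is already $U[0,1]$ by assumption, by Sklar's theorem (Proposition \ref{copUniqueProp}) it suffices to show two things: first that $V\sim U[0,1]$ as well, and second that the conditional distribution function $v\mapsto P(V\leq v\mid U=u)$ coincides with $\frac{\partial C_\theta}{\partial u}(u,v)$, which by Proposition \ref{EUVProp}(i) is exactly $u^{-\theta-1}(u^{-\theta}+v^{-\theta}-1)^{-\frac{1+\theta}{\theta}}$. In fact the cleanest route is to prove the conditional statement directly and then recover the margin of $V$ by integrating out $u$, or simply to compute the joint distribution function of $(U,V)$ from scratch.

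Concretely, I would fix $u\in(0,1)$ and $v\in(0,1)$ and compute $P(V\leq v\mid U=u)$ as a probability over the independent variable $Z\sim U[0,1]$. The defining relation is $V=\bigl(1+U^{-\theta}(Z^{-\frac{\theta}{1+\theta}}-1)\bigr)^{-\frac{1}{\theta}}$, and since $t\mapsto t^{-1/\theta}$ is strictly decreasing on $(0,\infty)$ for $\theta>0$, the event $\{V\leq v\}$ is equivalent to $\{1+u^{-\theta}(Z^{-\frac{\theta}{1+\theta}}-1)\geq v^{-\theta}\}$, i.e. $\{Z^{-\frac{\theta}{1+\theta}}\geq 1+u^{\theta}(v^{-\theta}-1)\}$. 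Because $1+u^\theta(v^{-\theta}-1)\geq 1$ (it equals $u^\theta v^{-\theta}+1-u^\theta$ with $v\leq 1$), the map $z\mapsto z^{-\frac{\theta}{1+\theta}}$ being decreasing gives the equivalent event $\{Z\leq (1+u^\theta(v^{-\theta}-1))^{-\frac{1+\theta}{\theta}}\}$. Since $Z$ is uniform, this probability is $(1+u^\theta(v^{-\theta}-1))^{-\frac{1+\theta}{\theta}}$, and a short rearrangement $1+u^\theta(v^{-\theta}-1) = u^\theta(u^{-\theta}+v^{-\theta}-1)$ turns this into $u^{-\theta-1}(u^{-\theta}+v^{-\theta}-1)^{-\frac{1+\theta}{\theta}}$, which is precisely $\frac{\partial C_\theta}{\partial u}(u,v)$.

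Having matched the conditional law, I would then recover the joint distribution: for $0\leq a,b\leq 1$,
\[
P(U\leq a,\,V\leq b)=\int_0^a P(V\leq b\mid U=u)\,du=\int_0^a \frac{\partial C_\theta}{\partial u}(u,b)\,du = C_\theta(a,b),
\]
using the fundamental theorem of calculus and $C_\theta(0,b)=0$. In particular, taking $a=1$ shows $V\sim U[0,1]$, so both margins are uniform and the joint distribution function equals $C_\theta$; thus $(U,V)\overset{d}{=}C_\theta$.

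The only genuine care-points, rather than real obstacles, are the boundary and monotonicity bookkeeping: one must check that the argument $1+u^\theta(v^{-\theta}-1)$ stays in the range where $z\mapsto z^{-\frac{1+\theta}{\theta}}$ is a valid quantile (it lies in $[1,\infty)$, so the resulting threshold lies in $(0,1]$, consistent with $Z\in[0,1]$), handle $v\to 0$ and $v\to 1$ as limiting cases, and confirm the direction of each inequality flip when composing the two decreasing power maps. None of this is deep, but it is where an error would creep in, so I would state the monotonicity of $t\mapsto t^{-1/\theta}$ and $z\mapsto z^{-\theta/(1+\theta)}$ explicitly before chaining the equivalences.
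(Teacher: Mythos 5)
Your proof is correct and takes essentially the same approach as the paper: both arguments rest on the conditional distribution $P(V\leq v\mid U=u)=\frac{\partial C_\theta}{\partial u}(u,v)$ together with the inverse-transform idea for $Z$. The only difference is one of direction and is cosmetic: the paper computes the conditional quantile function $F_{\theta,u}^{-1}$ and cites the quantile-transform fact, while you verify the conditional CDF of the constructed $V$ directly by chaining the monotone power maps and then integrate in $u$ to recover $C_\theta(a,b)$, which also makes the uniformity of $V$ explicit.
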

\begin{proof}
Proposition \ref{EUVProp} (i) yields
$$F_{\theta,u}(v)\gl P_{\theta}(V\leq v|U=u) = \frac{\partial C}{\partial u}(u,v),\quad\theta>0,\quad 0\leq u,v\leq 1.$$
Observe that
\begin{equation} \label{UVdistrEq}
F_{\theta,U}^{-1}(Z)\,\, {\buildrel \text{d} \over =}\,\, (U,V),\quad \theta > 0
\end{equation}
is equivalent to
\begin{equation} \label{UVdistrEq2}
F_{\theta,u}^{-1}(Z)\,\, {\buildrel \text{d} \over =}\,\, P_{\theta}^{V|U=u},\quad \theta>0, \quad 0\leq u\leq 1
\end{equation}
which is the condition of (\ref{UVdistrEq}) on $U=u$ and that (\ref{UVdistrEq2}) is a known fact, i.e. (\ref{UVdistrEq}) is true. Thus, it remains to calculate $F_{\theta,u}^{-1}$, $0\leq u\leq 1$, $\theta>0$. Consider for this
$$\frac{\partial C_{\theta}}{\partial u} = u^{-\theta-1}(u^{-\theta} + v^{-\theta} -1)^{-\frac{1+\theta}{\theta}} = F_{\theta,u}(v).$$
A straightforward calculation yields
$$F_{\theta,u}^{-1}(y) = \left(1+u^{-\theta}\left(y^{-\frac{\theta}{1+\theta}}-1\right)\right)^{-\frac{1}{\theta}},\quad 0\leq u,y\leq 1.$$
This proves together with (\ref{UVdistrEq}) the claim.
\end{proof}

Consider in this context also Lee \cite{Lee}. The next proposition is useful for the simulation of the Gaussian limit copulas in Theorem \ref{CopGaussTheorem}.
\begin{prop}
Fix any $-1\leq \tau \leq 1$ and let $X$ and $Y$ be two independent, N(0,1) distributed random variables. Set
\begin{equation*}
U\gl \phi(X),\quad V\gl \phi(\tau X + \sqrt{1-\tau^2}\, Y),\quad
\Sigma \gl \left(
\begin{array} {cc}
1 & \tau \\
\tau & 1
\end{array}
\right)
\end{equation*}

where $\phi$ denotes the cumulative distribution function of N(0,1). Then, it holds
$$(U,V)\,\, {\buildrel \text{d} \over =}\,\, \mathcal{T} N(0,\Sigma),\quad \rho(N(0,\Sigma)) = \tau.$$
\end{prop}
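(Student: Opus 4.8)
The plan is to verify the two claimed identities separately: first that the pair $(U,V)$ has joint distribution equal to the Gaussian copula $\mathcal{T}N(0,\Sigma)$, and second that $\rho(N(0,\Sigma)) = \tau$. The first is a direct computation: by construction $X$ and $\tau X + \sqrt{1-\tau^2}\,Y$ are jointly Gaussian, centred, each with unit variance (since $\tau^2 + (1-\tau^2) = 1$), and with covariance $E[X(\tau X + \sqrt{1-\tau^2}\,Y)] = \tau$ because $X,Y$ are independent standard normals. Hence $(X,\tau X + \sqrt{1-\tau^2}\,Y)\sim N(0,\Sigma)$. Applying $\phi$ componentwise is exactly applying the marginal CDFs, so by Sklar's theorem / Theorem 2.4.3 in Nelsen \cite{Nel} the pair $(\phi(X),\phi(\tau X+\sqrt{1-\tau^2}\,Y))$ has the copula of $N(0,\Sigma)$ as its joint distribution; that is $(U,V) \,{\buildrel d \over =}\, \mathcal{T}N(0,\Sigma)$. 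One should note that this step is vacuously fine when $|\tau| = 1$: the distribution degenerates onto a diagonal, but $\mathcal{T}N(0,\Sigma)$ is still well defined as $M$ or $W$ by Remark \ref{posStricRem}, and the formula still produces the correct pair.

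For the second identity, simply unwind the definition \eqref{rhodef} of $\rho$. Here $\rho(N(0,\Sigma))$ means $\rho(F)$ with $F = N(0,\Sigma)$, so with $(\xi,\eta)\sim N(0,\Sigma)$ we have $\rho(N(0,\Sigma)) = \frac{E[\xi\eta]}{\sqrt{E[\xi^2]\,E[\eta^2]}} = \frac{\sigma_{12}}{\sqrt{\sigma_{11}\sigma_{22}}} = \frac{\tau}{\sqrt{1\cdot 1}} = \tau$. This uses only that the entries of $\Sigma$ are precisely the second moments appearing in \eqref{rhodef}, which holds because $N(0,\Sigma)$ is centred; in fact this is the same identity already exploited implicitly in Proposition \ref{GaussCopEqProp}, where $a = \sigma_{12}/\sqrt{\sigma_{11}\sigma_{22}}$ was identified as the quantity determining the Gaussian copula.

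There is essentially no hard part here — the proposition is a convenience lemma packaging a standard simulation recipe for the bivariate Gaussian copula. The only thing worth being slightly careful about is the boundary case $|\tau| = 1$ (so that $\Sigma$ is only positive semidefinite and not strictly positive definite); one should either restrict attention to $|\tau| < 1$ for the copula-density argument and invoke Remark \ref{posStricRem} for the endpoints, or simply observe that the conclusion holds for all $-1\le\tau\le 1$ by the moment computation plus Sklar's theorem, which does not require strict positive definiteness. I would write the proof in the former style: compute the covariance matrix of $(X,\tau X+\sqrt{1-\tau^2}\,Y)$, conclude it is $N(0,\Sigma)$, invoke Theorem 2.4.3 in Nelsen \cite{Nel} for the copula statement, and close with the one-line evaluation of \eqref{rhodef} for the $\rho$ statement.
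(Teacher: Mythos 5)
Your proof is correct and follows essentially the same route as the paper: the paper identifies $\bigl(\begin{smallmatrix}1 & 0\\ \tau & \sqrt{1-\tau^2}\end{smallmatrix}\bigr)$ as the Cholesky factor of $\Sigma$ to conclude $(\phi^{-1}(U),\phi^{-1}(V))\,{\buildrel d \over =}\,N(0,\Sigma)$, which is exactly your covariance computation, and then reads off $\rho(N(0,\Sigma))=\tau$ from the definition (\ref{rhodef}). Your explicit appeal to Theorem 2.4.3 in Nelsen \cite{Nel} and your remark on the degenerate case $|\tau|=1$ only make explicit what the paper leaves implicit.
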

\begin{proof}
Observe that
$\left(\begin{array} {cc}
1 & 0 \\
\tau & \sqrt{1-\tau^2}
\end{array}
\right)$
is the Cholesky decomposition of
$\left(\begin{array} {cc}
1 & \tau \\
\tau & 1
\end{array}
\right)$, i.e.
$$(\phi^{-1}(U),\phi^{-1}(V)) \,\,{\buildrel \text{d} \over =}\,\, N(0,\Sigma).$$
This proves together with the definition of $\rho$, cf. (\ref{rhodef}), the claim.
\end{proof}

Figure \ref{jmpFinalFig} shows a scatter diagram of respectively $500$ i.i.d. samples of Clayton copulas and their associated Gaussian limit copulas, compare Table \ref{ClaytonGaussianTable}.

\begin{figure}[htbp] 
	\includegraphics[clip=true, bb=100 50 870 500, width=15.5cm]{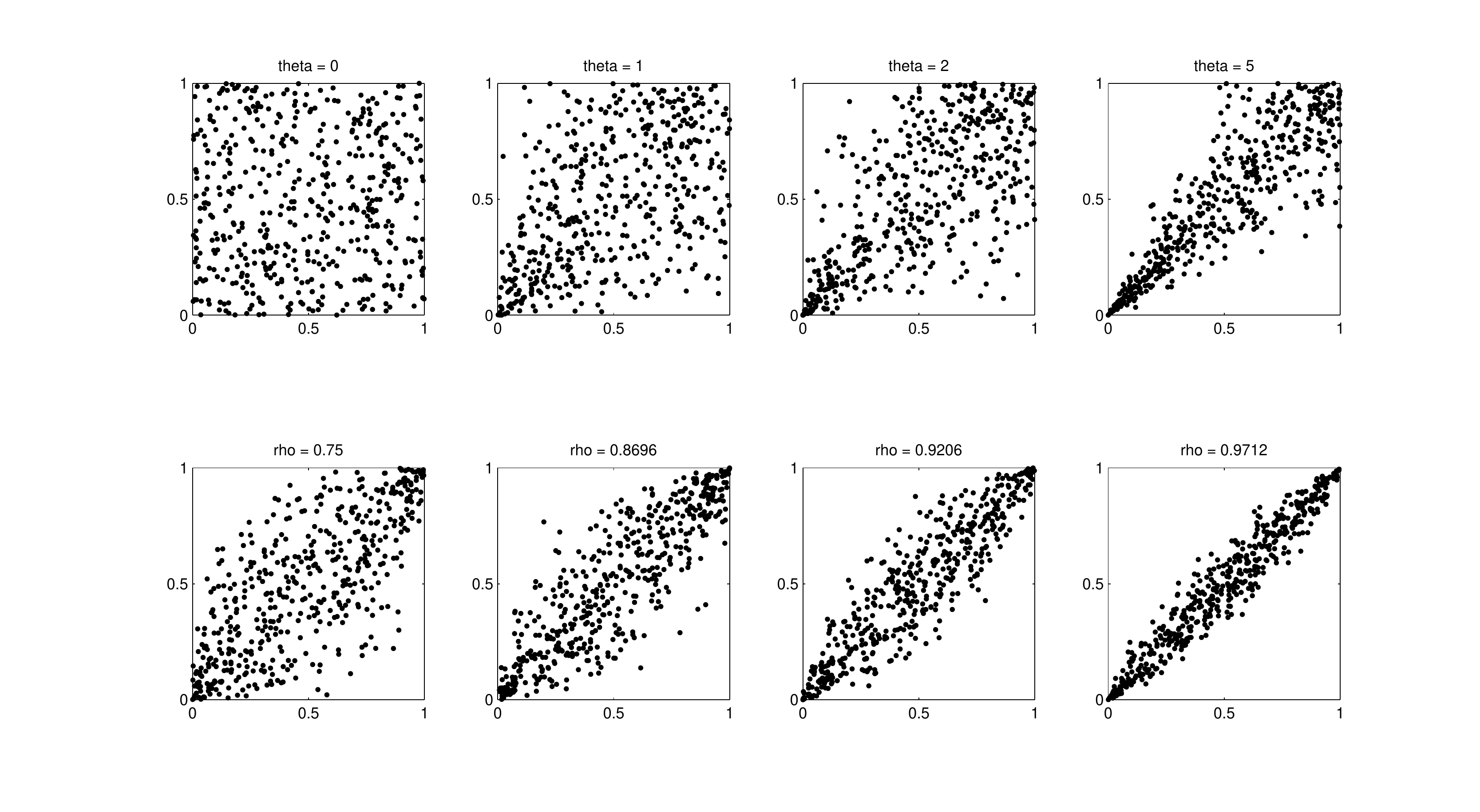}
\caption{Clayton copulas and their associated Gaussian limit copulas}
\label{jmpFinalFig}
\end{figure}

\newpage
Figure \ref{directConvFig} simulates the copulas $\mathcal{T}\Psi(\lambda,F_5)$, $\lambda=3,5,7,20$ by plotting respectively $500$ i.i.d. samples. Considering Theorem \ref{CopGaussTheorem} and Table \ref{ClaytonGaussianTable}, it holds
$$\mathcal{T} \Psi(\lambda,F_5) \to \mathcal{T}\Psi(\infty,F_5)\gl\mathcal{T}N\left(0,
\left(\begin{array} {cc}
1 & \tau \\
\tau & 1
\end{array}
\right)
\right),\quad \lambda\to\infty,$$ 
uniformly on $[0,1]^2$ with $\tau=0.9712$. However, Figure \ref{directConvFig} indicates that even $\mathcal{T}\Psi(3,F_5)$ is close to $\mathcal{T}\Psi(\infty,F_5)$. All four copula plots in Figure \ref{directConvFig} resemble the associated Gaussian limit copula $\mathcal{T}\Psi(\infty,F_5)$ in Figure \ref{jmpFinalFig}. Hence, this graphical comparison method seems to be inadequate, i.e. not subtle enough.

\begin{figure}[htbp] 
	\includegraphics[clip=true, bb=100 180 870 370, width=15.5cm]{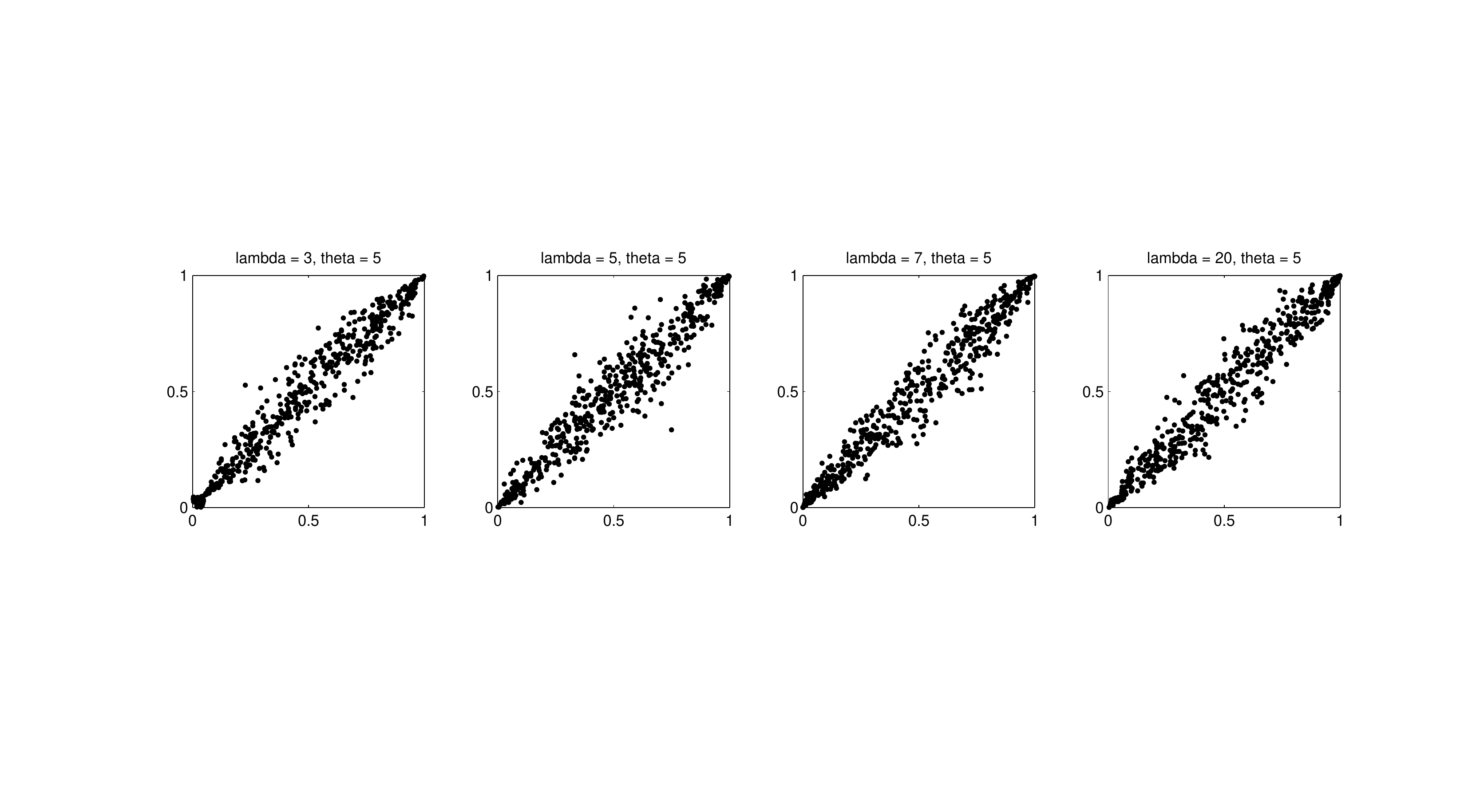}
\caption{Compound Poisson copulas for increasing $\lambda$.}
\label{directConvFig}
\end{figure}

As a solution, we simulate in Figure \ref{diffThetaFig} the density of the difference
$$\Delta(\lambda,\theta)\gl |\mathcal{T}\Psi(\lambda,F_{\theta}) - \mathcal{T}\Psi(\infty,F_{\theta})|,\quad \lambda=3,5,7,20,\,\,\theta=0,1,2,5.$$
To be more precise, let $(X_{\lambda,\theta}^k)_{k=1,\ldots,N}$ resp. $(Y_{\infty,\theta}^k)_{k=1,\ldots,N}$ be $N$ i.i.d. samples following the distribution $\mathcal{T} \Psi(\lambda,F_{\theta})$ resp. $\mathcal{T}\Psi(\infty,F_{\theta})$ and let $K\subseteq [0,1]^2$ be a measurable set. Then, it holds by the strong law of large numbers
\begin{eqnarray}
\left| \frac{1}{N} \sum_{k=1}^N (\one_K(X_{\lambda,\theta}^k) - \one_K(Y_{\infty,\theta}^k))\right| &=&
|E(\one_K(X_{\lambda,\theta}^1)) - E(\one_K(Y_{\infty,\theta}^1))| + o_P(1) \label{DEst}\\ &=&
|(\mathcal{T}\Psi(\lambda,F_{\theta}))(K) - (\mathcal{T}(\Psi(\infty,\theta))(K)| + o_P(1) \nonumber \\ &=&
\Delta(\lambda,\theta)(K) + o_P(1). \nonumber
\end{eqnarray}
Set
$$K_{i,j} \gl \left[\frac{i}{M},\frac{i+1}{M}\right) \times \left[\frac{j}{M},\frac{j+1}{M}\right),\quad i,j = 0,\ldots,M-1$$
for some fixed $M\in\mathds{N}$. Thus, (\ref{DEst}) yields that the numbers
$$c_{i,j}\gl \left|\frac{M^2}{N}\sum_{k=1}^N(\one_{K_{i,j}}(X_{\lambda,\theta}^k) - \one_{K_{i,j}}(Y_{\infty,\theta}^k))\right|,\quad i,j=0,\ldots,M$$
are approximations of 
$$\delta(\lambda,\theta)\gl\frac{\Delta(\lambda,\theta)(K_{i,j})}{\lambda^2(K_{i,j})},\quad i,j=0,\ldots,M-1,$$
which is for large $M$ an approximation to the density of $\Delta(\lambda,\theta)$ in the point $(\frac{i}{M},\frac{j}{M})$. We set $N=10^6$ and $M=30$ in our simulations in Figure \ref{diffThetaFig} . 

It remains to find a suitable representation of the function
$$\Psi\,:\,\{0,\ldots,29\}^2 \to \mathds{R}_+,\quad (i,j)\mapsto c_{i,j}.$$
$\Psi$ is realized as a $2$D-plot in the following way: Plot in the square $K_{i,j}$, $i,j=0,\ldots,29$ randomly $\lfloor \alpha c_{i,j}\rfloor$ points where $\alpha>1$ is a scaling constant in order to controll the average point intensity of the respective plots in Figure \ref{diffThetaFig}. Here, we choose $\alpha=20$, so that $m_{i,j}$ plotted points in a square $K_{i,j}$ represents a density difference of $\frac{m_{i,j}}{20}$.

The plots in Figure \ref{diffThetaFig} clearly illustrate the decrease of the density difference since the respective scatter diagrams are getting thinner with increasing $\lambda$. Further, the largest difference is around the origin. This is not surprising since a compound Poisson process does not jump with the probability $e^{-\lambda}$ in the unit intervall and thus, the corresponding distribution has an atom at the origin. This is in utter contrast to the continuity of the Gaussian limit copulas. However, this effect vanishes for increasing $\lambda$, compare Table \ref{lambdaTable}.

\begin{figure}[htbp] 
	\includegraphics[clip=true, bb=100 150 870 400, width=15.5cm]{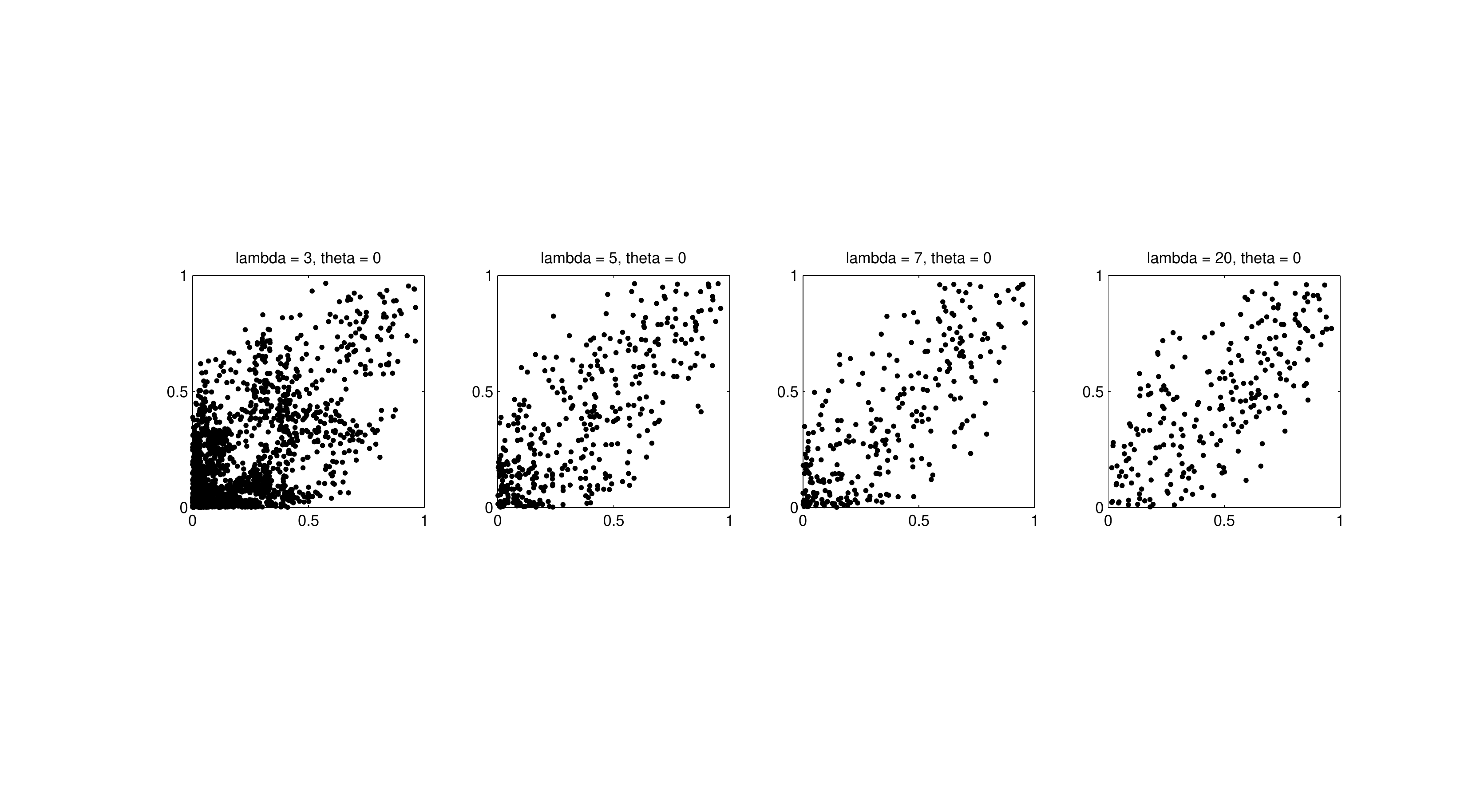}
	\includegraphics[clip=true, bb=100 150 870 400, width=15.5cm]{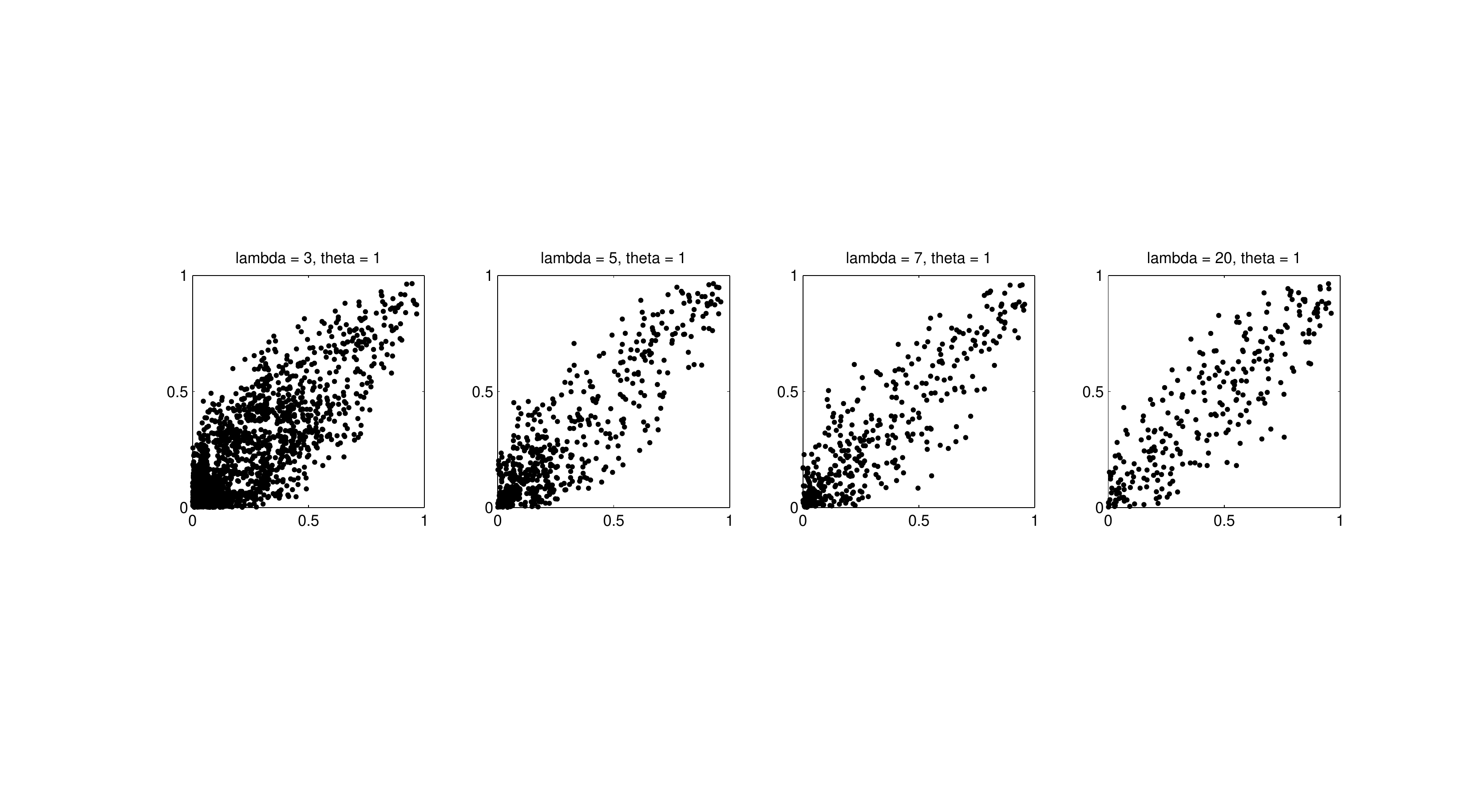} 
	\includegraphics[clip=true, bb=100 150 870 400, width=15.5cm]{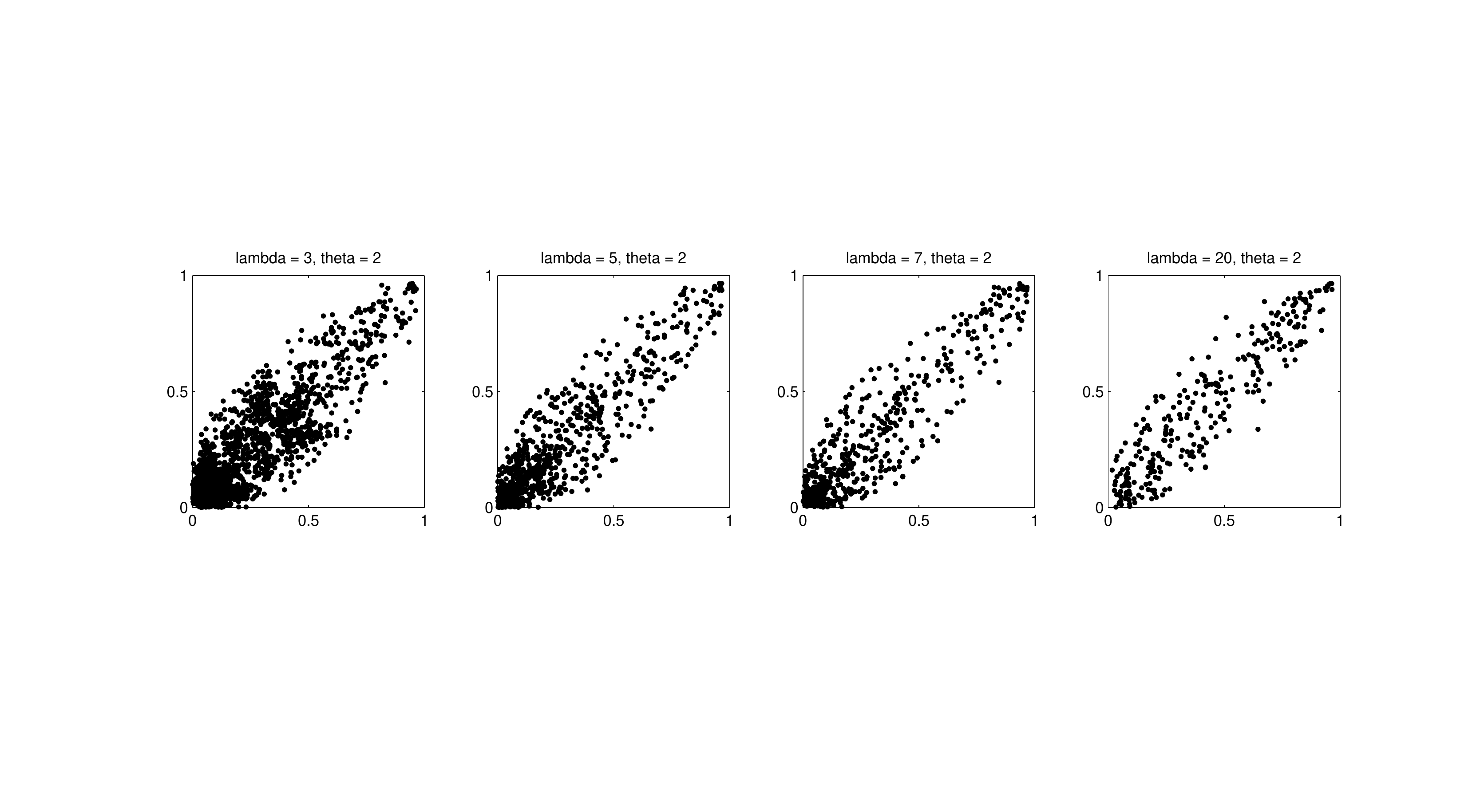}
	\includegraphics[clip=true, bb=100 150 870 400, width=15.5cm]{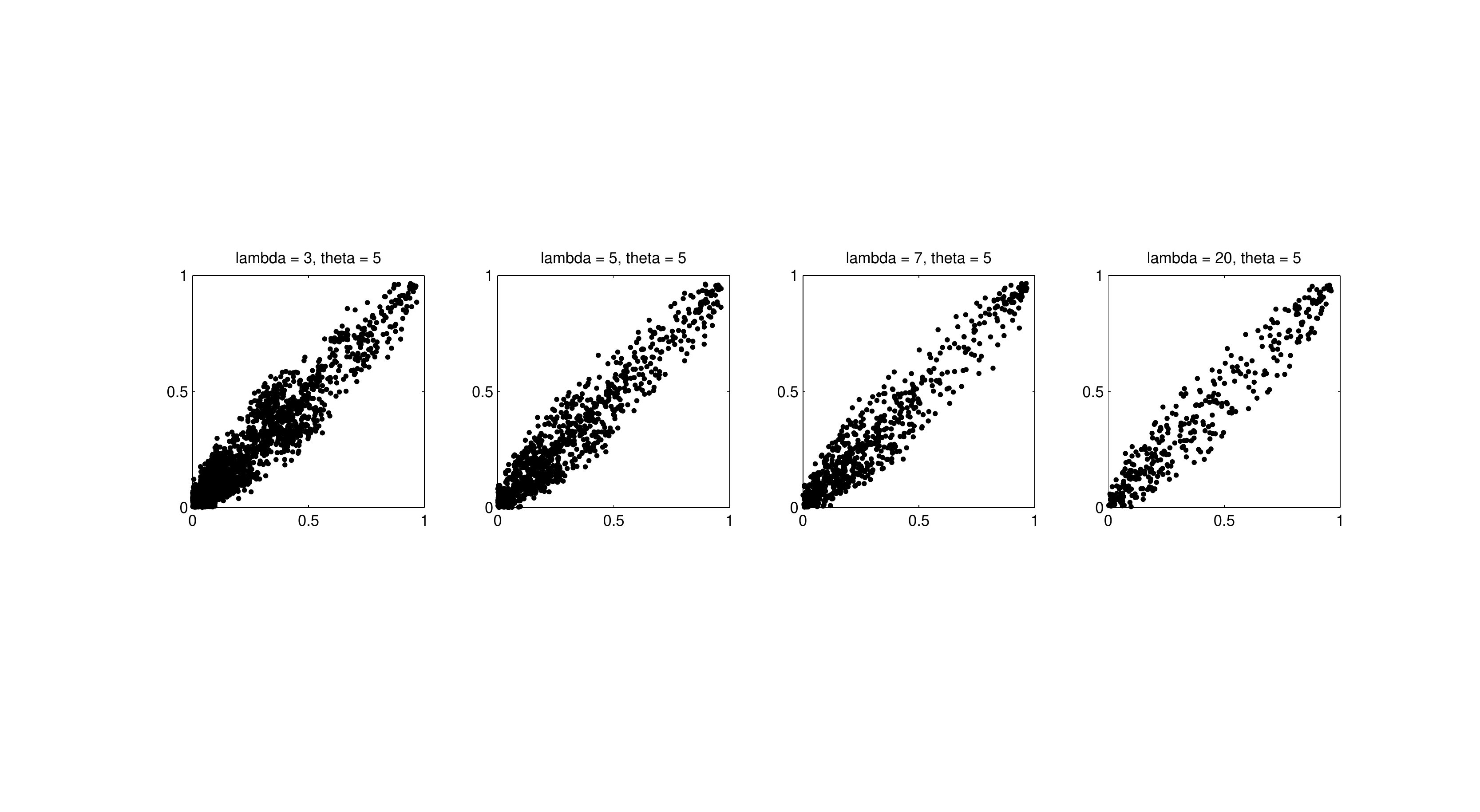}
\caption{Density difference plots}
\label{diffThetaFig}
\end{figure}


\begin{table}[htbp]
\begin{equation*}
\begin{array}{c|c|c|c|c}
\lambda & 3 & 5 & 7 & 20 \\
\hline
e^{-\lambda} & 0.4978\cdot 10^{-1} & 0.6737\cdot 10^{-2} & 0.91188\cdot 10^{-3} & 0.2061\cdot 10^{-8}
\end{array}
\end{equation*}
\caption{Probability of no jumps}
\label{lambdaTable}
\end{table}

Finally, Table \ref{diffMassTable} states the total difference mass of the limit copula and its approximation, i.e.
$$\|\Delta(\lambda,\theta)\|\gl \Delta(\lambda,\theta)([0,1]^2) = \sum_{i,j=0}^{M-1} \delta(\lambda,\theta)(K_{i,j})\,\lambda^2(K_{i,j}) = \frac{\lambda^2(K_{0,0})}{\alpha} \sum_{i,j=0}^{M-1} \alpha c_{i,j} = \frac{\text{number of dots}}{18000}.$$
The entries in Table \ref{diffMassTable} are decreasing in $\lambda$ and increasing in $\theta$. This can be interpreted in the sense that in the case of a Clayton copula, a stronger dependence of the components in the jump distribution results in a slower convergence to the Gaussian limit copula.

\begin{table}[htbp]
\begin{equation*}
\begin{array}{c|cccc}
\theta \backslash \lambda & 3 & 5 & 7 & 20 \\
\hline
0 & 0.1058 & 0.0227 & 0.0161 & 0.0132 \\
1 & 0.1095 & 0.0311 & 0.0228 & 0.0151 \\
2 & 0.1115 & 0.0413 & 0.0283 & 0.0161 \\
5 & 0.1293 & 0.0591 & 0.0411 & 0.0201
\end{array}
\end{equation*}
\caption{Total difference mass $\|\Delta(\lambda,\theta)\|$}
\label{diffMassTable}
\end{table}

\newpage

{\bf Acknowledgements.} The financial support of the Deutsche Forschunsgemeinschaft (FOR 916, project B4) is gratefully acknowledged.

\end{document}